\documentclass[11pt, onesided, reqno]{amsart}

\usepackage{amssymb}
\usepackage{amsmath}
\usepackage{enumerate}
\usepackage{graphicx}
\usepackage{tikz}

\setlength{\oddsidemargin}{-0.0in}
\setlength{\textwidth}{6.5in}
\setlength{\topmargin}{-0.5in}
\setlength{\textheight}{8.5in}
\evensidemargin\oddsidemargin

\setcounter{page}{1}

\newtheorem{theorem}{Theorem}

\newtheorem{lemma}[theorem]{Lemma}

\theoremstyle{definition}

\newtheorem{remark}[theorem]{Remark}

\newcommand{\eqnsection}{
\renewcommand{\theequation}{\thesection.\arabic{equation}}
    \makeatletter
    \csname  @addtoreset\endcsname{equation}{section}
    \makeatother}
\eqnsection

\def\e{\mathbf{e}}
\def\g{\mathbf{g}}
\def\E{\mathbb{E}}

\def\N{\mathbb{N}}

\def\R{\mathbb{R}}

\def\Pb{\mathbb{P}}

\def\F{\mathcal{F}}

\newcommand{\equi}{\; \mathop{\sim}\limits}
\def\={{\,\;\mathop{=}\limits^{\text{(law)}}\;\,}}

\def\qed{\hfill$\square$}

\allowdisplaybreaks

\makeatletter
\@namedef{subjclassname@2020}{%
  \textup{2020} Mathematics Subject Classification}
\makeatother

\begin{document}

\title[]{Total progeny for spectrally negative branching L\'evy processes with absorption}
\author[Christophe Profeta]{Christophe Profeta}

\address{
Universit\'e Paris-Saclay, CNRS, Univ Evry, Laboratoire de Math\'ematiques et Mod\'elisation d'Evry, 91037, Evry-Courcouronnes, France.
 {\em Email} : {\tt christophe.profeta@univ-evry.fr}
  }

\keywords{Branching  process ; Spectrally negative L\'evy process ; Total progeny}

\subjclass[2020]{60J80 ; 60G51}

\begin{abstract} 
We consider a spectrally negative branching L\'evy process in which particles are killed upon crossing below zero. It is known that such a process becomes extinct almost surely if the drift toward $-\infty$ is sufficiently strong to counterbalance the reproduction rate. In this note, we study the tail asymptotics of the number of particles absorbed at the boundary during the lifetime of the process, in both the subcritical and critical regimes.
\end{abstract}

\maketitle

\section{Introduction}

We consider a spectrally negative branching L\'evy process $X$ in which the particles are killed when entering $(-\infty, 0]$. More precisely, the process starts with an initial particle located at $a>0$ and has the following properties :
\begin{enumerate}
\item Each particle lives for an exponentially distributed time of parameter $\beta>0$, after which it dies and splits into two offspring.
\item Between branching events, each particle moves independently according to a spectrally negative L\'evy process $L$, starting from the location of its parent's death.
\item Particles are killed and removed from the system upon entering $(-\infty,0]$.
\end{enumerate}
We denote by ${\bf Z}_0$ the number of particles that have entered $(-\infty, 0]$ during the lifetime of the process. The aim of this note is to analyze the tail asymptotics of ${\bf Z}_0$, under assumptions ensuring that  $X$ becomes extinct almost surely. Since each branching event produces exactly two offspring, this is equivalent to studying the total number of individuals who have lived during the lifetime of the process, which is given by $2{\bf Z}_0-1$. \\

In the case of the branching Brownian motion with negative drift, the exact asymptotic behavior of $\Pb_a({\bf Z}_0 =  n)$ has been obtained by Maillard \cite{Mai} by analyzing the generating function of ${\bf Z}_0$, using differential equations and the fact that Brownian motion has no negative jumps. Berestycki, Brunet,  Harris \& Mi\l{}o\'s \cite{BBHSM} have then studied the case of branching Brownian motion with positive drift : in this case the particles all drift to $+\infty$, but  ${\bf Z}_0$ may remain finite although the branching process $X$ does not die out. These approaches seem hardly generalizable here, and we shall rather follow the ideas of A\"id\'ekon \cite{Aid} and A\"id\'ekon, Hu \& Zindy \cite{AHZ}, who have studied the total progeny of branching random walks. \\

In the following, we shall assume that all the processes and random variables are defined on the same probability space $(\Omega, \F, \Pb)$, and we shall denote  by $\Pb_a$, with an abuse of notation, both the laws of $X$ and $L$ when started from $a$.
For {$\lambda \in \R$, $\lambda \geq0$}, the Laplace exponent of $L$ is given by 
\[\Psi(\lambda) =\ln \E_0\left[e^{\lambda L_1}\right]=d\lambda +  \frac{\sigma^2}{2}\lambda^2 + \int_{-\infty}^0 \left(e^{\lambda x}- 1 - \lambda x 1_{\{|x|<1\}}\right) \nu(dx)\]
where $d \in \R$ is the drift coefficient, $\sigma \geq0$ the Gaussian coefficient, and $\nu$ is the L\'evy measure satisfying the standard integrability condition $ \int_{-\infty}^0 (x^2\wedge 1)\, \nu(dx)<+\infty$. 
We assume that the one-dimensional distributions of $L$ are absolutely continuous, i.e., 
\[\Pb_0(L_t\in dx) \ll dx, \qquad \text{for every }t>0\]
and we exclude the case where $-L$ is a subordinator.  As a consequence, the function $\Psi$ is strictly convex and tends to $+\infty$ as $\lambda\rightarrow +\infty$. This implies that for any $q\geq0$, the equation $\Psi(\lambda)=q$ admits at most two solutions, and we denote by $\Phi_-(q)$ the smallest one and by $\Phi_+(q)$ the largest one :
\[\Phi_-(q) = \inf\{\lambda\geq 0,\, \Psi(\lambda)=q\}\qquad \text{and}\qquad \Phi_+(q) = \sup\{\lambda\geq 0,\, \Psi(\lambda)=q\}.\]
One necessary condition for the branching process $X$ to become extinct a.s. is that $\Psi^\prime(0^+)<0$.  In this case, $\Psi$ being strictly convex, it admits a unique minimum at $\lambda_\ast>0$ which is such that $\Psi^\prime(\lambda_\ast)=0$ and $\Psi^{\prime\prime}(\lambda_\ast) = \sigma^2 + \int_{-\infty}^{0} x^2 e^{\lambda_\ast x} \nu(dx)>0$. As a consequence, setting $-q_\ast = \Psi(\lambda_\ast)$, the functions $\Phi_-$ and  $\Phi_+$ are well-defined on $[-q_\ast,+\infty)$,  
and $\Phi_+(-q_\ast)= \Phi_-(-q_\ast) = \lambda_\ast$. To simplify the notation, we shall simply write $\Phi(-q_\ast)$ in this case.
\begin{center}
\begin{tikzpicture}

\draw[thick, ->,>=stealth] (0,-2) --(0,2.5);
\draw[thick, ->,>=stealth]  (-1,0) --(7,0);
\draw (0,0) ..controls +(2,-3) and +(-1,-3).. (7,2.5);
\draw[dashed] (0,-1.36) -- (2, -1.36);
\draw (-.4, -1.36) node{$-q_\ast$};

\draw[dashed] (0,-0.7) -- (4.4, -0.7);
\draw (-.4, -0.75) node{$-\beta$};
\draw (-0.2, -0.2) node{$0$};

\draw[dashed] (0.59,-0) -- (0.59, -0.7);
\draw (.75, 0.25) node{$\Phi_-(-\beta)$};

\draw[dashed] (4.4, 0) -- (4.4, -0.7);
\draw (4.5, 0.25) node{$\Phi_+(-\beta)$};

\draw[dashed] (2.3,-0) -- (2.3, -1.36);
\draw (2.4, 0.25) node{$\Phi(-q_\ast)$};
\draw (2.4, 0.7) node{\rotatebox{90}{=}};
\draw (2.4, 1.1) node{$\lambda_\ast$};

\draw (6.4, 2) node{$\Psi$};

\draw (2, -2.5) node{Graph of the function $\Psi$};
\end{tikzpicture}
\end{center}

In \cite{Pro}, it was proven that the process $X$ dies out a.s. if and only if $\Psi^\prime(0^+)<0$ and $\beta\leq q_\ast$.
In other words, the drift toward $-\infty$ must be  strong enough to counterbalance the reproduction mechanism. This assumption shall thus be in force throughout the paper.  
Before stating our main theorem, we recall the definition of the scale functions $W^{(q)}$, which are defined for  $q\geq 0$ by:
\begin{equation}\label{eq:W}
\int_0^{+\infty}e^{-\lambda x} W^{(q)}(x)dx = \frac{1}{\Psi(\lambda)-q},\qquad \lambda > \Phi_+(q).
\end{equation}
It is known that $W^{(q)}$ is an increasing function that is null on $(-\infty,0)$. Moreover, for every $a>0$, the function $q\rightarrow W^{(q)}(a)$ admits an analytic extension to the complex plane, see Kyprianou \cite[Section 8.3]{Kyp}. For the case $q=0$, we shall write $W=W^{(0)}$ for simplicity. 

\begin{theorem}\label{theo:1}
\begin{enumerate}
\item[]
\item Suppose that $\beta<q_\ast$. Then there exist two constants $\kappa_1, \kappa_2>0$ independent of $a>0$, such that as $n\rightarrow +\infty$,
$$ \kappa_1  W^{(-\beta)}(a) n^{-\frac{\Phi_+(-\beta)}{\Phi_-(-\beta)}} \leq \Pb_a({\bf Z}_0\geq n)\leq \kappa_2  W^{(-\beta)}(a) n^{-\frac{\Phi_+(-\beta)}{\Phi_-(-\beta)}}.$$
\item Suppose that  $\beta=q_\ast$ and that the function $x\rightarrow e^{-\Phi(-q_\ast) x} W^{(-q_\ast)}(x)$ is concave. 
Then there exist two constants $\kappa_1, \kappa_2>0$ independent of $a>0$, such that as $n\rightarrow +\infty$,
$$\kappa_1W^{(-q_\ast)}(a) \frac{1}{n \ln^2(n)}  \leq \Pb_a({\bf Z}_0\geq n)\leq \kappa_2 W^{(-q_\ast)}(a)\frac{1}{n \ln^2(n)}.$$
\end{enumerate}
\end{theorem}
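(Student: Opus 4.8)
The plan is to reduce the tail of ${\bf Z}_0$ to the behaviour, as $s\uparrow 1$, of the generating function $F_a(s):=\E_a[s^{{\bf Z}_0}]$, and then apply a Tauberian argument (the sequence $n\mapsto\Pb_a({\bf Z}_0\geq n)$ being non-increasing). Conditioning the initial particle on whether it is absorbed before or after its first branching event gives the functional equation
\[
F_a(s)=s\,h(a)+\mathcal{K}\big[F_\cdot(s)^2\big](a),\qquad h(a):=\E_a\big[e^{-\beta\tau_0^-};\,\tau_0^-<\infty\big],
\]
where $\tau_0^-:=\inf\{t\geq 0:L_t\leq 0\}$ and $\mathcal{K}\varphi(a):=\beta\int_0^\infty e^{-\beta t}\E_a[\varphi(L_t);\,t<\tau_0^-]\,dt=\E_a[\varphi(L_{\e});\,\e<\tau_0^-]$, with $\e$ an independent $\mathrm{Exp}(\beta)$ time; one has $h=\Un-\mathcal{K}\Un$ and, by \cite{Pro}, $F_a(1)=1$. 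A many-to-one computation identifies $m(a):=\E_a[{\bf Z}_0]=\E_a[e^{\beta\tau_0^-};\,\tau_0^-<\infty]$, which is finite and given by the classical scale-function formula for the Laplace transform of $\tau_0^-$ analytically continued to $q=-\beta$ (resp.\ $q=-q_\ast$) and which satisfies $m=h+2\mathcal{K}m$; writing $g_a(s):=1-F_a(s)$ and $\widetilde g_a(s):=m(a)(1-s)-g_a(s)\geq 0$ one gets the clean recursion $\widetilde g_a(s)=2\,(\mathcal{K}\widetilde g_\cdot(s))(a)+(\mathcal{K}[g_\cdot(s)^2])(a)$.

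The structural point is that $W^{(-\beta)}$ (resp.\ $W^{(-q_\ast)}$) is the principal eigenfunction of $2\mathcal{K}$: from the martingale $\big(e^{\beta(t\wedge\tau_0^-)}W^{(-\beta)}(L_{t\wedge\tau_0^-})\big)_{t\geq 0}$ (a scale-function identity valid for all real $q$ after analytic continuation of $q\mapsto W^{(q)}$, see \cite[Ch.~8]{Kyp}) together with $W^{(-\beta)}\equiv 0$ on $(-\infty,0)$, one gets $\E_a[W^{(-\beta)}(L_t);\,t<\tau_0^-]=e^{-\beta t}W^{(-\beta)}(a)$, hence $2\mathcal{K}W^{(-\beta)}=W^{(-\beta)}$. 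This suggests the Doob $h$-transform: let $\Pb^\uparrow_a$ be the law under which $L$ runs with density $e^{\beta t}W^{(-\beta)}(L_t)/W^{(-\beta)}(a)$ on $\{t<\tau_0^-\}$ --- that is, $L$ Esscher-tilted by $\Phi_+(-\beta)$ and conditioned to stay positive --- and let $(\widehat L_k)_{k\geq 0}$ be this process sampled along the partial sums of i.i.d.\ $\mathrm{Exp}(\beta)$ times. Because $L$ is spectrally negative, $\Psi(\lambda)=-\beta$ has no root off the real axis in $\{\operatorname{Re}\lambda\geq 0\}$ (on a vertical line, $\operatorname{Re}\Psi(c+it)\leq\Psi(c)$ with equality only at $t=0$), so inverting $1/(\Psi(\lambda)+\beta)$ yields $W^{(-\beta)}(w)\asymp e^{\Phi_+(-\beta)w}$ and $m(w)\asymp e^{\Phi_-(-\beta)w}$ as $w\to+\infty$. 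When $\beta<q_\ast$ one has $\Psi'(\Phi_+(-\beta))>0$, so $\widehat L$ drifts to $+\infty$ at positive linear speed; when $\beta=q_\ast$, $\Phi_+(-q_\ast)=\lambda_\ast$ and $\Psi'(\lambda_\ast)=0$, so $\widehat L$ is (the sampling of) a centred spectrally negative L\'evy process conditioned to stay positive --- transient to $+\infty$ only ``diffusively'', which is the source of the logarithmic corrections.

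Dividing the recursion for $\widetilde g$ by $W^{(-\beta)}(a)$ turns $2\mathcal{K}$ into the Markov kernel of $\widehat L$, and iterating (legitimate because $\widehat L_k\to+\infty$ and $\widetilde g_w(s)/W^{(-\beta)}(w)\to 0$) produces the Green-function representation
\[
m(a)(1-s)-\big(1-F_a(s)\big)=W^{(-\beta)}(a)\,\E^\uparrow_a\!\left[\sum_{k\geq 0}\frac{\mathcal{K}\big[(1-F_\cdot(s))^2\big](\widehat L_k)}{W^{(-\beta)}(\widehat L_k)}\right],
\]
already exhibiting the factor $W^{(-\beta)}(a)$. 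It remains to estimate the right-hand side as $s\uparrow 1$. The elementary bounds $c\,((1-s)m(w)\wedge 1)\leq 1-F_w(s)\leq (1-s)m(w)\wedge 1$ (the upper one from $1-s^k\leq k(1-s)$; the lower one from a short argument that $\Pb_w({\bf Z}_0\leq 1/(1-s))$ is bounded away from $0$) make the summand of order $e^{-\Phi_+(-\beta)\widehat L_k}\wedge\big((1-s)^2e^{(2\Phi_-(-\beta)-\Phi_+(-\beta))\widehat L_k}\big)$; integrating against the occupation measure of $\widehat L$ --- whose density near level $z$ is of constant order in the subcritical case and of order $z$ in the critical one --- gives a total of order $(1-s)^{\Phi_+(-\beta)/\Phi_-(-\beta)}$ (subcritical), resp.\ $(1-s)/|\ln(1-s)|$ (critical). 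A monotone-density Tauberian step --- applied, when $\Phi_+(-\beta)/\Phi_-(-\beta)>2$, after subtracting the finitely many powers of $(1-s)$ coming from the lower factorial moments of ${\bf Z}_0$, or equivalently carried out directly on the non-increasing sequence $p_a(n):=\Pb_a({\bf Z}_0\geq n)$ via the recursive inequalities $2\,(\mathcal{K}p_\cdot(n))(a)-(\mathcal{K}[p_\cdot(n)^2])(a)\leq p_a(n)\leq 2\,(\mathcal{K}p_\cdot(\lceil n/2\rceil))(a)$ valid for $n\geq 2$ --- then converts these into the bounds of Theorem \ref{theo:1}.

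The step I expect to be the main obstacle is the precise control of the nonlinear term $\mathcal{K}[(1-F_\cdot(s))^2]$ in the boundary layer $w\asymp \Phi_-(-\beta)^{-1}\ln\tfrac1{1-s}$ (equivalently $\asymp\ln n$): there $1-F_w(s)$ is neither comparable to $(1-s)m(w)$ nor negligible, and it is exactly this layer that pins down the exponent $\Phi_+(-\beta)/\Phi_-(-\beta)$ and, in the critical regime, the power $\ln^2 n$ --- the crude $\min(\cdot,1)$ bound only produces $(1-s)$ there and misses the extra $|\ln(1-s)|^{-1}$. Resolving it calls for matching super- and sub-solutions of the functional recursion on that scale, and, in the critical case, for a sharp renewal estimate for the centred conditioned process $\widehat L$; this is where the concavity hypothesis on $x\mapsto e^{-\Phi(-q_\ast)x}W^{(-q_\ast)}(x)$ is used, guaranteeing that the natural sub/super-solutions are ordered as required and turning the recursive inequalities into a genuine comparison principle.
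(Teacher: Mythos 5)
Your approach is genuinely different from the paper's, but it is not a complete proof: you yourself flag the decisive step as an ``obstacle'' and leave it unresolved. The paper does not touch the generating function $F_a(s)$ or any $h$-transform at all. Its mechanism is: (i) truncate at a moving barrier $x$, getting the quantity ${\bf Z}_{0<x}$; (ii) prove via a recursion (Lemma~\ref{lem:Mn}) and Esscher transforms that $\E_a[{\bf Z}_{0<x}^{k}]$ is controlled for $k$ up to $\gamma+1 = \lfloor\Phi_+(-\beta)/\Phi_-(-\beta)\rfloor+1$, with the critical $(\gamma+1)$-th moment of order $W^{(-\beta)}(a)\,e^{(\gamma+1)\Phi_-(-\beta)x}/W^{(-\beta)}(x)$; (iii) for the upper bound, split $\Pb_a({\bf Z}_0>n)\le n^{-\gamma-1}\E_a[{\bf Z}_{0<x_n}^{\gamma+1}]+\Pb_a({\bf M}\ge x_n)$ with $x_n\sim\Phi_-(-\beta)^{-1}\ln n$; (iv) for the lower bound, first drive a particle to level $x_n$ using $\Pb_a({\bf M}\ge x_n)$ and then apply Paley--Zygmund to ${\bf Z}_{0<x_n+1}$ started at $x_n$. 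The known tail asymptotic \eqref{eq:Mbeta}--\eqref{eq:AsymptMq} of the overall maximum ${\bf M}$ from \cite{Pro} is an essential external input to both bounds; it appears nowhere in your argument, and it is precisely what lets the paper avoid the boundary-layer analysis that you identify as the crux.

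Concretely, the gap in your proposal is the passage from the Green-function representation
\[
m(a)(1-s)-\bigl(1-F_a(s)\bigr)=W^{(-\beta)}(a)\,\E^\uparrow_a\!\left[\sum_{k\geq 0}\frac{\mathcal{K}\bigl[(1-F_\cdot(s))^2\bigr](\widehat L_k)}{W^{(-\beta)}(\widehat L_k)}\right]
\]
to the claimed orders $(1-s)^{\Phi_+(-\beta)/\Phi_-(-\beta)}$ and $(1-s)/|\ln(1-s)|$. As you note, the crude bound $1-F_w(s)\le (1-s)m(w)\wedge 1$ fed into the sum yields the wrong power on the scale $w\asymp\Phi_-(-\beta)^{-1}\ln\frac{1}{1-s}$; you defer the fix to ``matching super- and sub-solutions'' and, in the critical case, to ``a sharp renewal estimate for the centred conditioned process,'' neither of which is produced. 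There are also secondary issues: your stated lower bound $c\,((1-s)m(w)\wedge 1)\le 1-F_w(s)$ is asserted but not established (and the justification sketch, with $\Pb_w({\bf Z}_0\le 1/(1-s))$ bounded away from $0$, gives a bound in the wrong direction); and the Tauberian step needs to handle the case $\Phi_+(-\beta)/\Phi_-(-\beta)\in\N$, where the expansion $1-F_a(s)=m(a)(1-s)+\dots$ would normally pick up a $(1-s)^{j}\ln\frac{1}{1-s}$ term --- the paper sidesteps this cleanly because its moment-plus-maximum split never invokes a Tauberian theorem on the generating function, while your remark about ``subtracting finitely many powers'' is again only a placeholder. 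Finally, your guess for the role of the concavity hypothesis is off: in the paper it is used to apply Karamata's Tauberian theorem to $W_{\Phi(-q_\ast)}(x)-W_{\Phi(-q_\ast)}(x-y)$ and to guarantee a sign (Lemma~\ref{lem:Wq}), not to order sub/super-solutions of a functional recursion.

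What your route would buy, if completed, is a more self-contained argument that does not presuppose the tail asymptotics of ${\bf M}$, and a sharper conclusion (a genuine one-term asymptotic for $1-F_a(s)$, hence potentially for $\Pb_a({\bf Z}_0\ge n)$, rather than two-sided bounds). But as written, the two steps that actually decide the exponents --- the boundary-layer estimate and the critical renewal estimate for $\widehat L$ --- are missing, so the proposal does not yet prove Theorem~\ref{theo:1}.
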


\begin{remark}\label{rem:concave}
The concavity assumption is essentially technical, and will allow us to use Karamata's Tauberian theorem  \cite[Theorem 1.7.6]{BGT}, as well as to control the sign of certain quantities.
Let us define, for $c\geq \Phi(-q_\ast)$, the Esscher transform :
\begin{equation}\label{eq:Esscher}
\frac{\text{d}\Pb_a^{c}}{\text{d}\Pb_a}\Bigg|_{\F_t} = e^{c(L_t-a) -\Psi(c) t},
\end{equation}
where $(\F_t)$ denotes the natural filtration of $L$. Under $\Pb_a^{c}$, the process $L$ remains a spectrally negative L\'evy process, with modified characteristics denoted by :
$$\begin{array}{ccc}
\Psi_c(\lambda)= \Psi(\lambda +c) - \Psi( c), &\qquad  \nu_c(dx) = e^{cx }\nu(dx),  &\qquad  W_c(x) = e^{-cx} W^{(\Psi(c))}(x).\\
\end{array}
$$
By taking $c=\Phi(-q_\ast)$,  the concavity assumption in Theorem \ref{theo:1} is equivalent to the fact that the function
$W_{\Phi(-q_\ast)}$ is concave for $z\geq0$. From Kyprianou, Rivero \& Song \cite[Theorem 2.1]{KRS}, this holds, for instance, when $\nu_{\Phi(-q_\ast)}$ is absolutely continuous with respect to the Lebesgue measure and such that $x\rightarrow \nu_{\Phi(-q_\ast)}(-\infty, -x)$  is log-convex. 
\end{remark}

Our proof of Theorem \ref{theo:1} is essentially inspired from A\"id\'ekon \cite{Aid}. The idea is that for ${\bf Z}_0$ to be very large, some particles must move far away from the boundary in order to have a large descendance. Therefore, the asymptotic behavior of ${\bf Z}_0$ is, to some extent, linked to that of the overall maximum ${\bf M}$ of the branching process. These asymptotics were computed in \cite{Pro} :
\begin{enumerate}
\item If $\beta<q_\ast$,  there exists a constant $c_{\beta}$ independent of $a>0$ such that 
\begin{equation}\label{eq:Mbeta}
 \Pb_a\left({\bf M} \geq x\right) \equi_{x\rightarrow+\infty} c_\beta W^{(-\beta)}(a)  e^{-\Phi_+(-\beta) x}.
 \end{equation}
\item If $\beta=q_\ast$, there exists a constant $c_{q_\ast}$  independent of $a>0$  such that 
\begin{equation}\label{eq:AsymptMq}
 \Pb_a\left({\bf M} \geq x\right) \equi_{x\rightarrow+\infty} c_{q_\ast} W^{(-q_\ast)}(a)  \frac{e^{-\Phi(-q_\ast) x}}{x}.
 \end{equation}
\end{enumerate}
In particular,  the appearance of the $1/x$ factor in the critical case explains the presence of logarithmic terms in the asymptotics of ${\bf Z}_0$.
The rest of the paper is organized as follows~: in Section 2, we write down a recursive equation which is at the core of our proof as it will allow us to control the moments of some quantities related to ${\bf Z}_0$. The proof of the subcritical case is then given in Section 3, and the critical case is handled in Section 4.

\section{Preliminary computations}

We start by collecting some preliminary results that apply both to the subcritical case $\beta<q_\ast$ and to the critical case $\beta=q_\ast$. 
To study the random variable ${\bf Z}_0$, we shall introduce an upper barrier at some fixed level $x>a$, and then let $x$ go to $+\infty$ along some well-chosen sequence. We denote by ${\bf Z}_{0<x}$ the total number of individuals that enter $(-\infty,0]$ during the lifetime of the process, and whose ancestors have always stayed in $(0,x)$. Let us define the following passage times :
$$\tau_0^- = \inf\{t\geq0, \; L_t\leq 0\} \qquad \text{ and }\qquad \tau_x^+ = \inf\{t\geq0, \; L_t\geq x\}.$$

\begin{lemma}\label{lem:Mn}
Let $n\in \N$ and suppose $x>a$. Then $\E_a\left[{\bf Z}_{0<x}^n\right]$ is finite and satisfies the recursive equation
$$\E_a\left[{\bf Z}_{0<x}^n\right]=\E_a\left[1_{\{\tau_0^- < \tau_x^+ \}} e^{\beta \tau_0^-} \right] + \beta  \int_0^{+\infty} \E_{a}\left[1_{\{\tau_0^-\wedge \tau_x^+ > r \}} R_n(L_{r})  \right]  e^{\beta r}dr$$
where $R_1=0$ and for $n\geq2$ and $z\in(0,x)$, the remainder $R_n$ satisfies the bound : 
\begin{equation}\label{eq:Rn}
R_n(z)\leq 2^{n+1}  \E_z\left[{\bf Z}_{0<x}\right] \E_z\left[{\bf Z}_{0<x}^{n-1}\right].   
\end{equation}
\end{lemma}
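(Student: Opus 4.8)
The plan is to condition on the trajectory of the initial particle up to its first branching time, and to decompose the branching structure at that first split. Let $e\sim \mathrm{Exp}(\beta)$ be the lifetime of the initial particle, independent of $L$. On the event $\{\tau_0^- < e \wedge \tau_x^+\}$ the initial particle is absorbed before branching and before reaching $x$, and it contributes exactly $1$ to ${\bf Z}_{0<x}$; on the event $\{\tau_x^+ < e \wedge \tau_0^-\}$ it leaves the strip and, together with its whole descendence, contributes $0$ to ${\bf Z}_{0<x}$; on the event $\{e < \tau_0^- \wedge \tau_x^+\}$ the initial particle branches at position $L_e \in (0,x)$ into two independent copies of the process started from $L_e$, so that ${\bf Z}_{0<x} = {\bf Z}_{0<x}' + {\bf Z}_{0<x}''$ conditionally, where the two summands are i.i.d.\ with the law of ${\bf Z}_{0<x}$ under $\Pb_{L_e}$. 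Writing $\Pb_a(e>r)=e^{-\beta r}$ and integrating out $e$ (using $\beta e^{-\beta r}\,dr$ for the density on the branching event and $e^{-\beta r}$ for the survival probability on the absorption event), the $n$-th moment becomes
\begin{equation}\label{eq:planrec}
\E_a\left[{\bf Z}_{0<x}^n\right]=\E_a\left[1_{\{\tau_0^- < \tau_x^+\}} e^{-\beta \tau_0^-}\cdot e^{\beta\tau_0^-}\right]+\beta\int_0^{+\infty}\E_a\left[1_{\{\tau_0^-\wedge\tau_x^+>r\}} e^{-\beta r}\,\E_{L_r}\!\big[({\bf Z}'_{0<x}+{\bf Z}''_{0<x})^n\big]\right]e^{\beta r}\,dr,
\end{equation}
where the factor $e^{-\beta r}$ from $\Pb(e>r)$ is inserted so that the restart at time $r$ is of a fresh process; the two exponential factors on the absorption term cancel, which is why $e^{\beta\tau_0^-}$ appears. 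Expanding the binomial $({\bf Z}'+{\bf Z}'')^n=\sum_{k=0}^n\binom{n}{k}({\bf Z}')^k({\bf Z}'')^{n-k}$ and using independence and equidistribution of the two offspring subtrees gives $\E_z[({\bf Z}'+{\bf Z}'')^n]=2\,\E_z[{\bf Z}_{0<x}^n]+R_n(z)$ with $R_n(z):=\sum_{k=1}^{n-1}\binom{n}{k}\E_z[{\bf Z}_{0<x}^k]\,\E_z[{\bf Z}_{0<x}^{n-k}]$, which is empty (hence $R_1=0$) for $n=1$. Substituting and moving the $\beta\int e^{-\beta r}2\,\E_{L_r}[{\bf Z}^n_{0<x}]\cdot e^{\beta r}$ term is not what one wants; instead the cleaner route is to first establish the case $n=1$ directly from \eqref{eq:planrec} (where $R_1=0$), obtaining an honest linear renewal-type identity for $m(a):=\E_a[{\bf Z}_{0<x}]$, and then to treat general $n$ by induction, feeding the bound $\E_z[{\bf Z}_{0<x}^k]\le \E_z[{\bf Z}_{0<x}]\,\E_z[{\bf Z}_{0<x}^{n-1}]$ for $1\le k\le n-1$ (valid once we know ${\bf Z}_{0<x}\ge 1$ on the relevant event and moments are nondecreasing in the exponent along integer powers by Jensen/monotonicity) into $R_n$ to get $R_n(z)\le \big(\sum_{k=1}^{n-1}\binom{n}{k}\big)\E_z[{\bf Z}_{0<x}]\,\E_z[{\bf Z}_{0<x}^{n-1}]\le 2^{n+1}\E_z[{\bf Z}_{0<x}]\,\E_z[{\bf Z}_{0<x}^{n-1}]$, which is \eqref{eq:Rn}.

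Finiteness of $\E_a[{\bf Z}_{0<x}^n]$ for each $n$ I would prove by induction on $n$ simultaneously with the recursion. For $n=1$: the strip $(0,x)$ is bounded, and because $-L$ is not a subordinator the killed process in $(0,x)$ is transient with exponential moments controlled by the scale functions $W,W^{(-\beta)}$ on $[0,x]$; more concretely, $m(a)=\E_a[{\bf Z}_{0<x}]$ solves $m(a)=\E_a[1_{\{\tau_0^-<\tau_x^+\}}e^{\beta\tau_0^-}]+\beta\int_0^\infty\E_a[1_{\{\tau_0^-\wedge\tau_x^+>r\}}2^{?}\dots]$, and one sees by a Gronwall/Neumann-series argument that $m$ is bounded on $[0,x]$ as soon as $\beta\,\E_a[\int_0^{\tau_0^-\wedge\tau_x^+}e^{\beta r}\,dr]<\infty$ uniformly, which follows from the exponential decay of $\Pb_a(\tau_0^-\wedge\tau_x^+>r)$ as $r\to\infty$ (a standard fact for a L\'evy process confined to a bounded interval, provable via the $q$-scale-function formula for $\Pb_a(\tau_x^+<\tau_0^-,\tau_x^+\in dr)$ or simply from compactness of $[0,x]$ and the strong Markov property). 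Given finiteness and the recursion for $n-1$, the integrand in \eqref{eq:planrec} for level $n$ is bounded by $2^{n+1}\sup_{z\in[0,x]}\E_z[{\bf Z}_{0<x}]\cdot\sup_{z\in[0,x]}\E_z[{\bf Z}_{0<x}^{n-1}]$ times $\beta e^{-\beta r}\Pb_a(\tau_0^-\wedge\tau_x^+>r)$ (after the cancellation of $e^{\beta r}$ against part of this — to be handled carefully), which is integrable; hence $\E_a[{\bf Z}_{0<x}^n]<\infty$ and the recursion is justified termwise. I would also record $\sup_{z\in[0,x]}\E_z[{\bf Z}_{0<x}^n]<\infty$ as part of the induction hypothesis, since the bound \eqref{eq:Rn} is only useful with that uniformity.

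The main obstacle is bookkeeping the exponential weights correctly: the term $e^{\beta\tau_0^-}$ in the absorption contribution and the $e^{\beta r}$ inside the integral arise because, in order to restart the process at the branching location $L_r$ as a \emph{fresh} copy, one must combine the branching density $\beta e^{-\beta r}\,dr$ with the event $\{\tau_0^-\wedge\tau_x^+>r\}$ \emph{without} the $e^{-\beta r}$ factor that would otherwise be attached — equivalently, the recursion is most transparent after an $h$-transform by $e^{\beta t}$ that turns the killed-at-rate-$\beta$ semigroup into the plain killed L\'evy semigroup in $(0,x)$. I would spell this out by writing ${\bf Z}_{0<x}=1_{\{\tau_0^-<e\wedge\tau_x^+\}}+1_{\{e<\tau_0^-\wedge\tau_x^+\}}({\bf Z}'_{0<x}+{\bf Z}''_{0<x})$, taking $\E_a[\,\cdot^n\mid L]$ first, then integrating over $e$, and only at the end using Tonelli to exchange the $e$-integral with the $\E_a[\cdot\mid\text{on }\{\cdot>r\}]$ — the nonnegativity of everything makes Tonelli legitimate and also delivers the finiteness claims as a byproduct. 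The remaining steps (binomial expansion, the elementary inequality $\sum_{k=1}^{n-1}\binom nk\le 2^n\le 2^{n+1}$, and $\E_z[{\bf Z}^k_{0<x}]\le\E_z[{\bf Z}_{0<x}]\E_z[{\bf Z}^{n-1}_{0<x}]$ for $1\le k\le n-1$) are routine.
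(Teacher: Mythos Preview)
Your plan has a real gap at the central step: the identity you are aiming for, with $e^{\beta\tau_0^-}$ in the absorption term and $e^{\beta r}$ in the integral, is \emph{not} what a single first-branching decomposition produces. Conditioning on the first split gives
\[
M_n(a)=\E_a\!\left[1_{\{\tau_0^-<\tau_x^+\}}e^{-\beta\tau_0^-}\right]
+2\beta\!\int_0^\infty\!\E_a\!\left[1_{\{\tau_0^-\wedge\tau_x^+>r\}}M_n(L_r)\right]e^{-\beta r}\,dr
+\beta\!\int_0^\infty\!\E_a\!\left[1_{\{\tau_0^-\wedge\tau_x^+>r\}}R_n(L_r)\right]e^{-\beta r}\,dr,
\]
with a $2M_n$ term on the right and $e^{-\beta\cdot}$ everywhere. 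Your equation~\eqref{eq:planrec} manufactures the $e^{\beta\cdot}$ factors by writing $1=e^{-\beta\tau_0^-}e^{\beta\tau_0^-}$ and $1=e^{-\beta r}e^{\beta r}$, but that is just an algebraic identity and does not convert the one-step recursion into the target. The $e^{\beta\cdot}$ weights actually arise only after \emph{iterating} the one-step relation along the spine: the factor $2$ at each branching accumulates, and for any $t>0$,
\[
\sum_{i\ge0}2^{\,i}\,\Pb(\g_i<t<\g_{i+1})=\sum_{i\ge0}2^{\,i}\frac{(\beta t)^i}{i!}e^{-\beta t}=e^{\beta t},
\]
which is exactly how the paper obtains both $\E_a[1_{\{\tau_0^-<\tau_x^+\}}e^{\beta\tau_0^-}]$ and the $e^{\beta r}$ inside the $R_n$ integral. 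Your remark about an ``$h$-transform by $e^{\beta t}$'' and the cancellation comment do not substitute for this computation; without it you never eliminate the implicit $2M_n$ term, and the recursion you write down is simply not the stated one. The paper further damps by $e^{-\lambda{\bf Z}_{0<x}}$ during the iteration (so that every quantity is a priori finite and one can bound $M_0^{(\lambda)}\le1$), controls the tail of the iteration via $\E_a[e^{\beta(\tau_0^-\wedge\tau_x^+)}]<\infty$, and only then lets $\lambda\downarrow0$ by monotone convergence; your finiteness sketch (``Gronwall/Neumann-series'', ``exponential decay of the exit time'') gestures at this but does not supply it, and in particular you need the decay rate to beat $e^{\beta r}$, which is where the standing hypothesis $\beta\le q_\ast$ enters.

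A second, smaller issue: your justification of \eqref{eq:Rn} via ``$\E_z[{\bf Z}_{0<x}^k]\le\E_z[{\bf Z}_{0<x}]\,\E_z[{\bf Z}_{0<x}^{n-1}]$ since ${\bf Z}_{0<x}\ge1$'' is not valid, because ${\bf Z}_{0<x}$ can equal $0$ (the initial particle may exit through $x$ first), and for $z$ near $x$ one has $\E_z[{\bf Z}_{0<x}]<1$. The correct route is log-convexity of moments (H\"older): $m_k\,m_{n-k}\le m_1\,m_{n-1}$ for $1\le k\le n-1$, after which $\sum_{k=1}^{n-1}\binom{n}{k}\le 2^n$ gives the bound.
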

\smallskip

\begin{proof}
We set for $\lambda>0$, $M_n^{(\lambda)}(a) = \E_a\left[{\bf Z}_{0<x}^ne^{-\lambda {\bf Z}_{0<x} }\right]$, and we denote by $\phi$ the initial particle.
Let us start by applying the Markov property at the first branching event :
\begin{align*}
M_n^{(\lambda)}(a) &= \E_a\left[{\bf Z}_{0<x}^ne^{-\lambda {\bf Z}_{0<x} } 1_{\{\phi \text{ has exited $(0,x)$ before splitting}\}}\right] \\
&\qquad \qquad \qquad + \E_a\left[{\bf Z}_{0<x}^ne^{-\lambda {\bf Z}_{0<x} }1_{\{\phi \text{ has stayed in $(0,x)$ before splitting} \}}\right]\\
&= e^{-\lambda}\Pb_a\left(\tau_0^- < \e \wedge \tau_x^+\right) +   \E_a\left[ 1_{\{\tau_0^-\wedge \tau_x^+ > {\bf e} \}}   \E_{L_\e}\left[({\bf Z}^{(1)}_{0<x} + {\bf Z}^{(2)}_{0<x})^n e^{-\lambda ({\bf Z}^{(1)}_{0<x} + {\bf Z}^{(2)}_{0<x})}  \right]\right]
\end{align*}
where ${\bf Z}^{(1)}_{0<x}$ and ${\bf Z}^{(2)}_{0<x}$ are two independent copies of ${\bf Z}_{0<x}$. Expanding the power, we further obtain 
\begin{equation}\label{eq:Mnl}
M_n^{(\lambda)}(a) = e^{-\lambda}\Pb_a\left(\tau_0^- < \e \wedge \tau_x^+\right) +    \E_a\left[ 1_{\{\tau_0^-\wedge \tau_x^+> {\bf e} \}}  
R_n^{(\lambda)}(L_\e)\right]+ 2  \E_a\left[ 1_{\{\tau_0^-\wedge \tau_x^+ > {\bf e} \}}  M_n^{(\lambda)}(L_\e) M_0^{(\lambda)}(L_\e)\right] 
\end{equation}
where $R_{1}^{(\lambda)}=0$ and for $n\geq2$ and $z\in(0,x)$ :
\begin{equation*}
R_n^{(\lambda)}(z) =  \sum_{k=1}^{n-1}\binom{n}{k}  M_k^{(\lambda)}(z) M_{n-k}^{(\lambda)}(z).
\end{equation*}
Let $(\e_i)_{i\in \N}$ be a sequence of  independent exponential random variables with parameter $\beta$, independent of $L$. For  $k\in \N$, we set $\g_k = \sum_{i=1}^{k}\e_{i}$, with the convention that $\g_0=0$. Using the fact that  $M_0^{(\lambda)}\leq 1$ in (\ref{eq:Mnl}), we deduce by an iterative procedure and using the Markov property that for any $k\in \N$ :
\begin{multline*}
M_n^{(\lambda)}(a)  \leq e^{-\lambda}\sum_{i=0}^k 2^{i}\Pb_a(\tau_0^- < \tau_x^+, \g_i < \tau_0^- < \g_{i+1}) + 
\sum_{i=0}^k 2^{i}  \E_a\left[ 1_{\{\tau_0^-\wedge \tau_x^+>{\bf g}_{i+1} \}}  R_n^{(\lambda)}(L_{\g_{i+1}})\right] \\+  2^{k+1}  \E_a\left[ 1_{\{\tau_0^-\wedge \tau_x^+ > {\bf g}_{k+1} \}}  M_n^{(\lambda)}(L_{\g_{k+1}})\right].
\end{multline*}
Recall now that the random variable $\g_{k+1}$ follows the Gamma distribution :
\[\Pb\left(\g_{k+1}\in dr\right) = \frac{\beta^{k+1}}{k!} r^{k} e^{-\beta r}dr, \qquad r>0.\]
Using the classical inequality $x^n e^{-\lambda x} \leq (n/\lambda)^n e^{-n}$ for $x,n,\lambda>0$, the final term is bounded by
\begin{align*}
&2^{k+1}  \E_a\left[ 1_{\{\tau_0^-\wedge \tau_x^+ > {\bf g}_{k+1} \}}  M_n^{(\lambda)}(L_{\g_{k+1}})\right]\\
&\qquad\leq  2^{k+1}  \Pb_a\left(\tau_0^-\wedge \tau_x^+ > {\bf g}_{k+1}\right) \left(\frac{n}{\lambda}\right)^ne^{-n}\\
&\qquad=\left(\frac{n}{\lambda}\right)^ne^{-n}    \int_0^{+\infty} \Pb_a\left(\tau_0^-\wedge \tau_x^+ > r\right) \frac{(2\beta)^{k+1}}{k!} r^{k} e^{-\beta r}dr  \\
&\qquad=\left(\frac{n}{\lambda}\right)^ne^{-n}  \frac{(2\beta)^{k+1}}{k!}     \int_0^{1} \E_a\left[e^{\beta(\tau_0^-\wedge \tau_x^+) s}  (\tau_0^-\wedge \tau_x^+)\times   (\tau_0^-\wedge \tau_x^+)^{k}e^{-2\beta(\tau_0^-\wedge \tau_x^+) s}\right] s^{k} ds  \\
&\qquad\leq \left(\frac{n}{\lambda}\right)^ne^{-n}  \frac{(2\beta)^{k+1}}{k!}   \left(\frac{k}{2\beta}\right)^{k} e^{-k}    \int_0^{1}     \E_a\left[e^{\beta(\tau_0^-\wedge \tau_x^+) s}  (\tau_0^-\wedge \tau_x^+)  \right] ds\\
&\qquad\leq 2 \left(\frac{n}{\lambda}\right)^ne^{-n}  \frac{k^k e^{-k}  }{k!} \E_a\left[e^{\beta(\tau_0^-\wedge \tau_x^+) }  \right]\xrightarrow[k\rightarrow +\infty]{}0
\end{align*}
thanks to Stirling's asymptotics. As a consequence, letting $k\rightarrow +\infty$, we obtain the bound
$$ M_n^{(\lambda)}(a) \leq  e^{-\lambda }\sum_{i=0}^{+\infty} 2^{i}\Pb_a(\tau_0^- < \tau_x^+, \g_i < \tau_0^- < \g_{i+1}) + 
\sum_{i=0}^{+\infty} 2^{i}  \E_a\left[ 1_{\{\tau_0^-\wedge \tau_x^+ > {\bf g}_{i+1} \}}  R_n^{(\lambda)}(L_{\g_{i+1}})\right]. $$
Recall next that conditionally to $\{\g_{i+1}=r\}$, the random variable $\g_i$ is distributed as the maximum of $i$ independent uniform random variables on $[0,r]$, i.e. 
$$\Pb(\g_i\in du|\g_{i+1}=r ) = \frac{i}{r^{i}} u^{i-1} du,\qquad \qquad u\in(0,r).$$
The sums may thus be computed and one obtains, using the Fubini-Tonelli theorem,  
\begin{align*}
&\sum_{i=0}^{+\infty} 2^{i}\Pb_a(\tau_0^- < \tau_x^+, \g_i < \tau_0^- <\g_{i+1})\\
&\qquad\qquad  = \int_0^{+\infty}\!\! \int_0^{r} \sum_{i=0}^{+\infty} 2^{i}\Pb_a(\tau_0^- < \tau_x^+, u< \tau_0^- < r)   \frac{i}{r^{i}} u^{i-1} du  \frac{\beta^{i+1}}{i!} r^{i} e^{-\beta r}dr\\
&\qquad\qquad  = \int_0^{+\infty}  \sum_{i=0}^{+\infty} 2^{i} \E_a\left[1_{\{\tau_0^- < \tau_x^+, \tau_0^- < r\}} (\tau_0^-)^i \right]  \frac{\beta^{i+1}}{i!}  e^{-\beta r}dr\\
&\qquad\qquad  =  \int_0^{+\infty}  \E_a\left[1_{\{\tau_0^- < \tau_x^+, \tau_0^-< r\}} e^{2 \beta \tau_0^-} \right] \beta e^{-\beta r}dr\\
&\qquad\qquad  =\E_a\left[1_{\{\tau_0^- < \tau_x^+ \}} e^{\beta \tau_0^-} \right], 
\end{align*}
and similarly
$$ \sum_{i=0}^{+\infty} 2^{i}  \E_a\left[ 1_{\{\tau_0^-\wedge \tau_x^+ > {\bf g}_{i+1} \}}  R_n^{(\lambda)}(L_{\g_{i+1}})\right]   =\beta  \int_0^{+\infty} \E_{a}\left[1_{\{\tau_0^-\wedge \tau_x^+ > r \}} R_n^{(\lambda)}(L_{r})  \right]  e^{\beta r}dr.$$
Finally, we have thus proven that 
\begin{align*}
M_n^{(\lambda)}(a)& \leq  e^{-\lambda }\E_a\left[1_{\{\tau_0^- < \tau_x^+ \}} e^{\beta \tau_0^-} \right]  +\beta  \int_0^{+\infty} \E_{a}\left[1_{\{\tau_0^-\wedge \tau_x^+> r \}} R_n^{(\lambda)}(L_{r})  \right]  e^{\beta r}dr\\
&\leq \E_a\left[1_{\{\tau_0^- < \tau_x^+ \}} e^{\beta \tau_0^-} \right] + \sup_{z\in[0,x]}R_n^{(\lambda)}(z)\times \E_a\left[e^{\beta(\tau_0^-\wedge \tau_x^+) }  \right].
\end{align*}
Letting $\lambda \downarrow 0$ and applying the monotone convergence theorem, we deduce by recursion (recall that $R_1^{(\lambda)}=0$) that $M_n^{(0)}(a) = \E_a[{\bf Z}_{0<x}^n]$ is finite for every $n\in\N$. Finally, taking the limit as $\lambda \downarrow 0$ in (\ref{eq:Mnl}) and observing that $M_0^{(0)}=1$, we conclude, following the same proof as above, that 
$$\E_a\left[{\bf Z}_{0<x}^n\right]=\E_a\left[1_{\{\tau_0^- < \tau_x^+ \}} e^{\beta \tau_0^-} \right] + \beta  \int_0^{+\infty} \E_{a}\left[1_{\{\tau_0^-\wedge \tau_x^+ > r \}} R_n(L_{r})  \right]  e^{\beta r}dr$$
where
$$R_n(z) = \sum_{k=1}^{n-1}\binom{n}{k}  \E_z\left[{\bf Z}_{0<x}^k\right] \E_z\left[{\bf Z}_{0<x}^{n-k}\right]. $$
The announced bound on $R_n$ follows from the inequality for $a,b\geq0$ :
$$
\sum_{k=1}^{n-1} \binom{n}{k} a^k b^{n-k} \leq2^n \left(a b^{n-1} + b a^{n-1}\right) . 
$$
\end{proof}

\section{The subcritical case}

We start by studying the subcritical case $\beta<q_\ast$. Note that from Lemma \ref{lem:Mn}, we have
$$\E_a\left[{\bf Z}_{0<x}\right]=\E_a\left[1_{\{\tau_0^- < \tau_x^+ \}} e^{\beta \tau_0^-} \right].$$
Taking the limit as $x\rightarrow +\infty$, we deduce that
$$\E_a\left[{\bf Z}_0\right]=\E_a\left[ e^{\beta \tau_0^-} \right]$$
which could also have been obtained as in Maillard \cite[Lemma 2.4]{Mai}, by applying a stopping line version of the many-to-one lemma.
We now derive the asymptotics of $\E_a\left[{\bf Z}_{0}\right]$ as $a\rightarrow +\infty$. 
Applying the Esscher transform (\ref{eq:Esscher}) with $c=\Phi_-(-\beta)$, we first obtain the identity  
$$  \E_a\left[e^{\beta t\wedge \tau_0^{-}} \right]= e^{\Phi_-(-\beta)a}\E_{a}^{\Phi_-(-\beta)}\left[e^{-\Phi_-(-\beta)L_{t\wedge \tau_0^-}}  \right].$$
Note that $\tau_0^-$ is a.s. finite under $\Pb_a^{\Phi_-(-\beta)}$ since $\Psi_{\Phi_-(-\beta)}^\prime(0^+) = \Psi^\prime(\Phi_-(-\beta))<0$.
Letting $t\rightarrow+\infty$, we deduce from Fatou's lemma together with the path inequality $L_{t\wedge \tau_0^-} >L_{\tau_0^-}$  that 
\begin{equation}\label{eq:Etau0}
  \E_a\left[e^{\beta\tau_0^{-}} \right] =  e^{\Phi_-(-\beta)a}\E_{a}^{\Phi_-(-\beta)}\left[e^{-\Phi_-(-\beta)L_{\tau_0^-}}  \right].
  \end{equation}
Then, since $\nu_{\Phi_-(-\beta)}$ admits some exponential moments, we may apply the result of Roynette, Vallois \& Volpi \cite[Theorem 2.3]{RVV}  to the dual process $-L$ to deduce that under $\Pb_a^{\Phi_-(-\beta)}$ the undershoot converges in distribution towards some  finite  constant $\chi_\beta>0$ :
$$
\E_{a}^{\Phi_-(-\beta)}\left[e^{-\Phi_-(-\beta)L_{\tau_0^-}}  \right]=\E_{0}^{\Phi_-(-\beta)}\left[e^{-\Phi_-(-\beta)(a+L_{\tau_{-a}^-})}  \right]  \xrightarrow[a\rightarrow+\infty]{}   \chi_\beta
$$
which is given  by 
$$ \chi_\beta = \frac{1}{\Psi^\prime(\Phi_-(-\beta))}  \left(\int_0^{+\infty} \left(e^{-\Phi_+(-\beta) v} - e^{-\Phi_-(-\beta)v}\right)  \nu(-\infty, -v) dv +   \frac{\sigma^2}{2} \left(\Phi_-(-\beta)-\Phi_+(-\beta)\right)\right).$$ 
In particular, this implies that
\begin{equation}\label{eq:SupL0}
C_1= \sup_{a\geq0} \E_{a}^{\Phi_-(-\beta)}\left[e^{-\Phi_-(-\beta)L_{\tau_0^-}}  \right] <+\infty.
 \end{equation}
The following lemma gives a control on all the moments of the r.v. ${\bf Z}_{0<x}$.

\begin{lemma}\label{lem:Z0b}
Set $\gamma = \left\lfloor \frac{\Phi_+(-\beta)}{\Phi_-(-\beta)}\right\rfloor\geq 1$.
\begin{enumerate}
\item For any $k\in \N$ such that $k< \gamma$, there exists a constant $C_k$, independent of $x$ and $a$, such that 
$$\E_a\left[ {\bf Z}_{0<x}^{k}\right] \leq C_k e^{k \Phi_-(-\beta)a}.$$
\item For $k=\gamma$, there exists a constant $C_\gamma$, independent of $x$ and $a$, such that 
$$\E_a\left[ {\bf Z}_{0<x}^{\gamma}\right] \leq C_\gamma (1+\E_a^{\Phi_+(-\beta)}\left[\tau_x^+\right]) e^{\gamma \Phi_-(-\beta)a}.$$
\item For $k=\gamma+1$, there exists a constant $C_{\gamma +1}$, independent of $x$ and $a$, such that 
$$ \E_a\left[ {\bf Z}_{0<x}^{\gamma+1}\right] \leq  C_1 e^{\Phi_-(-\beta) a} +  C_{\gamma+1} \frac{W^{(-\beta)}(a)}{W^{(-\beta)}(x)} e^{(\gamma+1) \Phi_-(-\beta) x}.$$ 
\end{enumerate}
\end{lemma}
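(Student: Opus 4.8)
The plan is to iterate the recursion of Lemma~\ref{lem:Mn} one moment at a time, using at each step a well-chosen exponential tilting of $L$ to absorb the spatial growth carried by the remainder $R_n$. Write $c=\Phi_-(-\beta)$ and $c_+=\Phi_+(-\beta)$, so that $\Psi(c)=\Psi(c_+)=-\beta$ and $c<\lambda_\ast<c_+$; recall $\gamma c\le c_+<(\gamma+1)c$ and set $\delta=(\gamma+1)c-c_+>0$. From the identity for $\E_a[{\bf Z}_{0<x}]$ recorded just after Lemma~\ref{lem:Mn}, combined with (\ref{eq:Etau0}) and (\ref{eq:SupL0}),
$$\E_a\left[{\bf Z}_{0<x}\right]=\E_a\left[1_{\{\tau_0^-<\tau_x^+\}}e^{\beta\tau_0^-}\right]\le\E_a\left[e^{\beta\tau_0^-}\right]=e^{ca}\,\E_a^{c}\left[e^{-cL_{\tau_0^-}}\right]\le C_1\,e^{ca},$$
which gives (1) for $k=1$ and will supply, in each of (1)--(3), the contribution of the first summand $\E_a[1_{\{\tau_0^-<\tau_x^+\}}e^{\beta\tau_0^-}]$ of the recursion. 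The basic tool for the induction is the identity: for $A\in\F_r$ and $\lambda\ge 0$, the $\Pb_a$-martingale property of $e^{\lambda(L_t-a)-\Psi(\lambda)t}$ gives $\E_a[1_A e^{\lambda L_r}]=e^{\lambda a+\Psi(\lambda)r}\,\Pb_a^{(\lambda)}(A)$, where $\Pb_a^{(\lambda)}$ is the associated tilted law (coinciding with (\ref{eq:Esscher}) when $\lambda\ge\Phi(-q_\ast)$). Plugging (\ref{eq:Rn}) into Lemma~\ref{lem:Mn}, the second summand there is controlled, after tilting by $\lambda=nc$, via $\int_0^{+\infty}\E_a[1_{\{\tau_0^-\wedge\tau_x^+>r\}}e^{nc L_r}]e^{\beta r}\,dr=e^{nca}\int_0^{+\infty}e^{(\beta+\Psi(nc))r}\,\Pb_a^{(nc)}(\tau_0^-\wedge\tau_x^+>r)\,dr$.

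For (1), i.e.\ $2\le k\le\gamma-1$: then $kc\in(c,c_+)$, so $\beta+\Psi(kc)<0$ and the $r$-integral above is at most $1/(-(\beta+\Psi(kc)))$; bounding $R_k(z)\le 2^{k+1}\E_z[{\bf Z}_{0<x}]\E_z[{\bf Z}_{0<x}^{k-1}]\le 2^{k+1}C_1C_{k-1}e^{kcz}$ (from $\E_z[{\bf Z}_{0<x}]\le C_1e^{cz}$ and the induction hypothesis) and using $e^{ca}\le e^{kca}$ gives $\E_a[{\bf Z}_{0<x}^k]\le C_k e^{kca}$. For (2), i.e.\ $k=\gamma$: if $\gamma c<c_+$ the same computation gives $\E_a[{\bf Z}_{0<x}^\gamma]\le C_\gamma e^{\gamma c a}$, which is stronger than claimed; if $\gamma c=c_+$ then $\beta+\Psi(\gamma c)=0$ and the $r$-integral equals $\E_a^{c_+}[\tau_0^-\wedge\tau_x^+]\le\E_a^{c_+}[\tau_x^+]$, finite since $\Psi_{c_+}'(0^+)=\Psi'(c_+)>0$ forces the $c_+$-tilted process to drift to $+\infty$; with $e^{ca}\le e^{\gamma c a}$ this gives the stated bound (and if $\gamma=1$ then $R_1=0$, so (2) is the first display).

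The delicate case is (3), $k=\gamma+1$, where $(\gamma+1)c>c_+$, so that $\beta+\Psi((\gamma+1)c)>0$ and the time-exponential can no longer be dominated. Here I would tilt by $c_+$ rather than by $(\gamma+1)c$: since $\Psi(c_+)=-\beta$ one has the exact identity $e^{\beta r}e^{(\gamma+1)cL_r}=e^{c_+ a}e^{\delta L_r}\frac{\mathrm d\Pb_a^{c_+}}{\mathrm d\Pb_a}\big|_{\F_r}$. Using (\ref{eq:Rn}) with $\E_z[{\bf Z}_{0<x}]\le C_1 e^{cz}$ and part (2) --- in which $\E_z^{c_+}[\tau_x^+]=(x-z)/\Psi'(c_+)$ by the first-passage formula for the $c_+$-tilted process --- the second summand of the recursion of Lemma~\ref{lem:Mn} is bounded by
$$C\,e^{c_+ a}\,\E_a^{c_+}\left[\int_0^{\tau_0^-\wedge\tau_x^+}\left(1+\tfrac{x-L_r}{\Psi'(c_+)}\right)e^{\delta L_r}\,dr\right]=C\,e^{c_+ a}\int_0^{x}\left(1+\tfrac{x-y}{\Psi'(c_+)}\right)e^{\delta y}\,u(a,y)\,dy,$$
where $u(a,y)=\frac{W_{c_+}(a)W_{c_+}(x-y)}{W_{c_+}(x)}-W_{c_+}(a-y)\le\frac{W_{c_+}(a)}{W_{c_+}(x)}W_{c_+}(x-y)$ is the $0$-potential density of the $c_+$-tilted process killed on exiting $(0,x)$ (Kyprianou \cite[Chapter~8]{Kyp}) and $W_{c_+}(z)=e^{-c_+ z}W^{(-\beta)}(z)$ is its scale function. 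The substitution $u=x-y$ turns $e^{\delta y}$ into $e^{\delta x}e^{-\delta u}$, so the integral is at most $\frac{W_{c_+}(a)}{W_{c_+}(x)}e^{\delta x}\int_0^{+\infty}\big(1+\tfrac{u}{\Psi'(c_+)}\big)e^{-\delta u}W_{c_+}(u)\,du$, the last integral being a finite constant, read off (along with its first moment in $u$) from $\int_0^{+\infty}e^{-\lambda u}W_{c_+}(u)\,du=1/\Psi_{c_+}(\lambda)$ at $\lambda=\delta>0$. Finally, $W_{c_+}(a)/W_{c_+}(x)=e^{c_+(x-a)}W^{(-\beta)}(a)/W^{(-\beta)}(x)$ and $\delta+c_+=(\gamma+1)c$ turn this into $C_{\gamma+1}\frac{W^{(-\beta)}(a)}{W^{(-\beta)}(x)}e^{(\gamma+1)cx}$; adding the first-summand term $C_1 e^{\Phi_-(-\beta)a}$ completes (3).

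I expect (3) to be the real obstacle. The naive estimate --- replacing $e^{\delta L_r}$ by $e^{\delta x}$ and bounding $\E_a^{c_+}[\tau_0^-\wedge\tau_x^+]$ trivially --- loses a factor polynomial in $x$, which would be fatal because Theorem~\ref{theo:1} is eventually obtained by sending $x\to+\infty$ along a geometric sequence; routing the occupation integral through the scale function is what repairs this, since after the change of variable the weight $e^{-\delta u}$ concentrates the mass near $u=0$ and replaces the spurious polynomial by a bounded constant. Along the way one must also check that $W^{(-\beta)}$ is positive and increasing on $(0,+\infty)$ throughout the subcritical regime, that the $c_+$-tilted process has positive mean $\Psi'(c_+)$ (so $\tau_x^+$ is a.s.\ finite with $\E_z^{c_+}[\tau_x^+]=(x-z)/\Psi'(c_+)$), and that none of the constants $C_k$ produced by the induction depends on $a$ or $x$.
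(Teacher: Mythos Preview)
Your proposal is correct and follows essentially the same approach as the paper's proof: an induction on $k$ using the recursion of Lemma~\ref{lem:Mn}, Esscher tilting by $k\Phi_-(-\beta)$ for parts (1)--(2) and by $\Phi_+(-\beta)$ for part (3), followed in (3) by the two-sided potential formula and the change of variable $z=x-y$. The only cosmetic differences are that the paper bounds the final integral $\int_0^\infty(1+z/\Psi'(\Phi_+(-\beta)))e^{-\delta z}W^{(-\beta)}(z)e^{-\Phi_+(-\beta)z}\,dz$ via the asymptotic $W^{(-\beta)}(z)\sim e^{\Phi_+(-\beta)z}/\Psi'(\Phi_+(-\beta))$ rather than via the Laplace identity $\int_0^\infty e^{-\lambda u}W_{c_+}(u)\,du=1/\Psi_{c_+}(\lambda)$ you invoke, and cites Bertoin~\cite{BerScale} instead of Kyprianou for the resolvent density; both routes are equivalent.
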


\begin{proof}
We prove Point (1) by iteration. For $k=1$, this is a direct consequence of (\ref{eq:Etau0}) and (\ref{eq:SupL0}) since $ \E_a\left[{\bf Z}_{0<x}\right] \leq  \E_a\left[ {\bf Z}_{0}\right]$. 
Let us take $2\leq k\leq  \gamma$, and assume that the result holds true for $k-1$.
From the bound (\ref{eq:Rn}) of $R_k$, we have 
$$R_{k}(z) \leq    2^{k+1} C_1 C_{k-1}  e^{k\Phi_-(-\beta)a}. $$
Then, using Lemma \ref{lem:Mn} and applying the Esscher transform with $c=k\Phi_-(-\beta)$ :
\begin{align*}
\E_a\left[ {\bf Z}_{0<x}^{k}\right] &\leq  \E_a\left[ {\bf Z}_{0<x}\right]  + 2^{k+1} C_1 C_{k-1}  \beta\int_0^{+\infty} \E_{a}\left[1_{\{\tau_0^-\wedge \tau_x^+ > r \}}  e^{k\Phi_-(-\beta) L_r }  \right]  e^{\beta r}dr\\
& \leq C_1 e^{\Phi_-(-\beta) a} + 2^{k+1} C_1 C_{k} e^{k \Phi_-(-\beta) a  } \beta \int_0^{+\infty}    \E_{a}^{k\Phi_-(-\beta)}\left[1_{\{\tau_0^-\wedge \tau_x^+ > r \}} \right] e^{(\beta + \Psi(k\Phi_-(-\beta))) r} dr. 
\end{align*}
Now, if $k< \gamma$, then $\Psi(k\Phi_-(-\beta))<-\beta$ and bounding the indicator function by one, the integral is then at most $ 1 / (\Psi(k\Phi_-(-\beta))-\beta)$ which yields Point (1). To prove Point (2), observe first that when $k=\gamma$, the previous argument still applies if $ \frac{\Phi_+(-\beta)}{\Phi_-(-\beta)}$ is not an integer, i.e. if $ \frac{\Phi_+(-\beta)}{\Phi_-(-\beta)}\neq \gamma$. Otherwise, we have $\Psi(\gamma\Phi_-(-\beta))) = \Psi(\Phi_+(-\beta)) = -\beta$ and the integral becomes 
$$\int_0^{+\infty}    \E_{a}^{\Phi_+(-\beta)}\left[1_{\{\tau_0^-\wedge \tau_x^+ >r \}} \right]  dr =  \E_a^{\Phi_+(-\beta)}\left[\tau_0^-\wedge \tau_x^+\right] \leq \E_a^{\Phi_+(-\beta)}\left[ \tau_x^+\right] . $$
Finally, to prove Point (3), we start again from Lemma \ref{lem:Mn}  and write using the Esscher transform this time with $c=\Phi_+(-\beta)$ : 
\begin{align}
\notag \E_a\left[ {\bf Z}_{0<x}^{\gamma+1}\right] &\leq   \E_a\left[ {\bf Z}_{0<x}\right]  + 2^{\gamma+2} C_1 C_{\gamma} \beta  \int_0^{+\infty} \E_{a}\left[1_{\{\tau_0^-\wedge \tau_x^+ > r \}}  \left(1+  \E_{L_r}^{\Phi_+(-\beta)}\left[\tau_x^+\right] \right)e^{(\gamma+1)\Phi_-(-\beta) L_r }  \right]  e^{\beta r}dr\\
\notag &\leq C_1 e^{\Phi_-(-\beta) a} +2^{\gamma+2} C_1 C_{\gamma}  \beta  e^{\Phi_+(-\beta) a}   \times \\
\label{eq:integral}&\qquad  \int_0^{+\infty} \E_{a}^{\Phi_+(-\beta)}\left[1_{\{\tau_0^-\wedge \tau_x^+ > r \}}  \left(1+  \E_{L_r}^{\Phi_+(-\beta)}\left[\tau_x^+\right] \right)e^{((\gamma+1)\Phi_-(-\beta) -\Phi_+(-\beta)) L_r }  \right] dr.
\end{align}
Recall next that from Bertoin \cite[Theorem 1]{BerScale} the resolvent of the killed process on $(0, x)$ is defined by 
\begin{equation}\label{eq:resolvent}
\int_0^{+\infty}\Pb_a(L_t \in dy, \tau_0^- \wedge \tau_{x}^+ > t ) dt = \left( \frac{W(a)W(x-y)}{W(x)} - 1_{\{a\geq y\}} W(a-y)\right) dy.
\end{equation}
Using the identity $ e^{\Phi_+(-\beta) z} W_{\Phi_+(-\beta)}(z) = W^{(-\beta)}(z)$, the integral (\ref{eq:integral}) is bounded by 
\begin{multline*}
\int_0^{x}   \left(1+  \E_{y}^{\Phi_+(-\beta)}\left[\tau_x^+\right] \right)e^{((\gamma+1)\Phi_-(-\beta) -\Phi_+(-\beta)) y}   \frac{W_{\Phi_+(-\beta)}(a)W_{\Phi_+(-\beta)}(x-y)}{W_{\Phi_+(-\beta)}(x)} dy\\
=  e^{(\gamma+1)\Phi_-(-\beta) x}   \frac{W_{\Phi_+(-\beta)}(a)}{W^{(-\beta)}(x)}  \int_0^{x}   \left(1+  \E_{x-z}^{\Phi_+(-\beta)}\left[\tau_x^+\right] \right)e^{-(\gamma+1)\Phi_-(-\beta) z}  W^{(-\beta)}(z)dz.
\end{multline*}
 Then, by translation invariance, 
$$\E_{x-z}^{\Phi_+(-\beta)}\left[\tau_x^+\right] = \E_{0}^{\Phi_+(-\beta)}\left[\tau_{z}^+\right] =  \frac{z}{\Psi^\prime(\Phi_+(-\beta))} $$
which is independent of $x$. The claim finally follows from the fact that the limit as $x\rightarrow +\infty$ of the last integral is finite since $(\gamma+1)\Phi_-(-\beta) > \Phi_+(-\beta)$ and 
\begin{equation}\label{eq:limW+}
W^{(-\beta)}(z) \equi_{z\rightarrow +\infty} \frac{1}{\Psi^\prime(\Phi_+(-\beta))} e^{\Phi_+(-\beta) z}.
\end{equation}
\end{proof}

\subsection{Upper bound}

Let us recall that we denote by ${\bf M}$ the overall maximum of the branching process $X$, and that its asymptotics are given in  (\ref{eq:Mbeta}).
We decompose the tail of ${\bf Z}_0$ according to whether ${\bf M}$ is large or not :
\begin{align}
\notag \Pb_a({\bf Z}_0>n) &\leq \Pb_a({\bf Z}_0>n, {\bf M}< x) +  \Pb_a( {\bf M}\geq x) \\
\notag & \leq \Pb_a({\bf Z}_{0<x}>n) +  \Pb_a( {\bf M}\geq x)  \\
\label{eq:UpB}&\leq n^{-\gamma-1}  \E_a\left[{\bf Z}_{0<x}^{\gamma+1}\right]+  \Pb_a( {\bf M}\geq x) .
\end{align}
Consider the sequence $(x_n)_{n\geq 1}$ defined by 
$$x_n = \frac{1}{\Phi_-(-\beta)}\ln(n).$$
From Lemma \ref{lem:Z0b} and the asymptotics (\ref{eq:limW+}) and (\ref{eq:Mbeta}), we deduce that 
$$
 \limsup_{n\rightarrow +\infty} n^{ \frac{\Phi_+(-\beta)}{\Phi_-(-\beta)}}\Pb_a({\bf Z}_0>n) \leq   C_{\gamma+1} \Psi^\prime(\Phi_+(-\beta))  W^{(-\beta)}(a) 
+ c_\beta W^{(-\beta)}(a)
$$
which gives the upper bound.

\subsection{Lower bound}
We now proceed to the lower bound. Applying the strong Markov property 
\begin{align}
\notag \Pb_a({\bf Z}_0 > n) &\geq \Pb_a\left({\bf Z}_0 > n| {\bf M}\geq x \right) \Pb_a({\bf M}\geq x )\\
\notag & \geq \Pb_x\left({\bf Z}_0 > n \right) \Pb_a({\bf M}\geq x )\\
\label{eq:LowB}&\geq \Pb_x\left({\bf Z}_{0<x+1} > n \right) \Pb_a({\bf M}\geq x ).
\end{align}
Furthermore, the absence of positive jumps allows to write down the decomposition 
\begin{align*}
\E_x\left[ {\bf Z}_{0<x+1}\right] &=   \E_x\left[ e^{\beta \tau_0^-} \right] - \E_x\left[1_{\{\tau_{x+1}^+ < \tau_0^-\}} e^{\beta \tau_{x+1}^+}\right] \E_{x+1}\left[e^{\beta \tau_0^-}\right].
\end{align*}
Recall from the Esscher transform and Kyprianou \cite[Theorem 8.1]{Kyp} that
$$\E_x\left[1_{\{\tau_{x+1}^+ < \tau_0^-\}} e^{\beta \tau_{x+1}^+}\right] = \frac{W^{(-\beta)}(x)}{W^{(-\beta)}(x+1)}\xrightarrow[x\rightarrow+\infty]{}e^{-\Phi_+(-\beta)}.$$
As a consequence, since $\beta<q_\ast$, we deduce from (\ref{eq:Etau0}) that
\begin{equation}\label{eq:ExZx}
\E_x\left[ {\bf Z}_{0<x+1}\right] \equi_{x\rightarrow+\infty}e^{\Phi_-(-\beta)x} \chi_\beta  \left(1 - e^{-(\Phi_+(-\beta) - \Phi_-(-\beta))}\right).
\end{equation}
Let us take a sequence $(x_n)_{n\in \N}$ such that $\E_{x_n}\left[ {\bf Z}_{0<x_n+1}\right]=2n$. From  (\ref{eq:ExZx}), we thus have as before
$$x_n \equi_{n\rightarrow+\infty} \frac{1}{\Phi_-(-\beta)}\ln(n).$$  
Applying Paley-Zygmund's inequality, we obtain
$$\Pb_{x_n}\left({\bf Z}_{0<x_n+1} \geq n\right) = \Pb_{x_n}\left({\bf Z}_{0<x_n+1} \geq \frac{1}{2} \E_{x_n}[{\bf Z}_{0<x_n+1}]\right) 
\geq \frac{n^2}{\E_{x_n}[{\bf Z}_{0<x_n+1}^2]}$$
and we deduce from the asymptotics of ${\bf M}$ that, as $n\rightarrow+\infty$, 
\begin{equation}\label{eq:lowB2}
 \Pb_a({\bf Z}_0 > n) \geq c_\beta \frac{n^2}{\E_{x_n}[{\bf Z}_{0<{x_n}+1}^2]}   W^{(-\beta)}(a) n^{-\frac{\Phi_+(-\beta)}{\Phi_-(-\beta)}}.
 \end{equation}
It remains thus to study the second moment of ${\bf Z}_{0<x+1}$. From Lemma \ref{lem:Z0b}, three situations may occur~:
\begin{enumerate}
\item If $\gamma=1$, then from Point (3) :
$$
\E_{x_n}[{\bf Z}_{0<x_n+1}^2]  \leq C_1 e^{\Phi_-(-\beta) x_n} +   C_2 \frac{W^{(-\beta)}(x_n)}{W^{(-\beta)}(x_n+1)} e^{2 \Phi_-(-\beta)(x_n+1)}.\\
$$
Letting $n\rightarrow +\infty$, we thus deduce from (\ref{eq:limW+}) that for $n$ large enough
 $$\E_{x_n}[{\bf Z}_{0<x_n+1}^2] \leq \left(C_1 + C_2   e^{2\Phi_-(-\beta)-\Phi_+(-\beta))} \right)n^2.  $$
\item If $\gamma=2$, then from Point (2) :
$$
\E_{x_n}[{\bf Z}_{0<x_n+1}^2]   \leq C_2\left(1+\E_{x_n}^{\Phi_+(-\beta)}\left[\tau_{x_n+1}^+\right]\right) n^2=C_2\left(1+\E_{0}^{\Phi_+(-\beta)}\left[\tau_{1}^+\right]\right) n^2
$$
\item If $\gamma\geq 3$, then from Point (1) :
$$\E_{x_n}[{\bf Z}_{0<x_n+1}^2]  \leq C_{2} n^2$$
\end{enumerate}
Finally, in all three cases, we have obtained that there exists a constant $K>0$ such that, as $n\rightarrow +\infty$,
$$\frac{n^2}{\E_{x_n}[{\bf Z}_{0<x_n+1}^2]} \geq \frac{1}{K}>0.$$
Plugging this last relation in (\ref{eq:lowB2}) gives the announced lower bound.
\qed

\section{The critical case}

We now turn our attention to the critical case. As in the subcritical case, Lemma \ref{lem:Mn} and the Esscher transform (\ref{eq:Esscher}) imply
\begin{equation}\label{eq:EZq}
\E_{a}[{\bf Z}_0] = \E_{a}\left[e^{q_{\ast}\tau_0^-}\right] = e^{\Phi(-q_\ast) a} \E_a^{\Phi(-q_\ast) }\left[e^{- \Phi(-q_\ast) L_{\tau_0^-} }\right] <\infty
\end{equation}
which is finite thanks to Kyprianou \& Palmowski \cite[Corollary 4]{KP}. We begin by collecting several properties of the function $W_{\Phi(-q_\ast)}$ that we shall need in the sequel.

\subsection{The function $W_{\Phi(-q_\ast)}$}
As pointed out in Remark \ref{rem:concave}, and using the identity
$W_{\Phi(-q_\ast)}(z) = e^{-\Phi(-q_\ast)z}W^{(-q_\ast)}(z)$,  the concavity assumption of Theorem \ref{theo:1} is equivalent to assuming that the function $W_{\Phi(-q_\ast)}$ is concave for $z\geq0$. For $z<0$, we have by definition $W_{\Phi(-q_\ast)}(z) =0$. Also, from (\ref{eq:W}), the asymptotic behavior of $W_{\Phi(-q_\ast)}$ is given by :
\begin{equation}\label{eq:limWq}
W_{\Phi(-q_\ast)}(z) \equi_{z\rightarrow+\infty} \frac{2}{\Psi^{\prime\prime}(\Phi(-q_\ast))} z.
\end{equation}

\begin{lemma}\label{lem:Wq}
Let $y>0$. 
\begin{enumerate}
\item It holds : 
$$W_{\Phi(-q_\ast)}(x)-W_{\Phi(-q_\ast)}(x-y) \equi_{x\rightarrow+\infty} \frac{2y}{\Psi^{\prime\prime}(\Phi(-q_\ast))}.$$
\item  There exist two constants $c_1, c_2>0$ such that for $x\geq y$ :
$$0\leq W_{\Phi(-q_\ast)}(x)-W_{\Phi(-q_\ast)}(x-y) - \frac{2y}{\Psi^{\prime\prime}(\Phi(-q_\ast))} \leq \frac{y}{x}(c_1+c_2y).$$
\end{enumerate}
\end{lemma}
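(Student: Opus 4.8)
The plan is to prove both points simultaneously by working with the concave function $f := W_{\Phi(-q_\ast)}$, exploiting the known linear asymptotics \eqref{eq:limWq}, namely $f(z) \sim \frac{2}{\Psi''(\Phi(-q_\ast))}\, z$ as $z \to +\infty$, together with concavity. Write $m = \frac{2}{\Psi''(\Phi(-q_\ast))} > 0$. For Point (1), I would observe that a concave function $f$ on $[0,\infty)$ with $f(0)=0$ has a nonincreasing right-derivative $f'_+$, and the asymptotic $f(z) \sim m z$ forces $f'_+(z) \to m$ as $z \to +\infty$ (if the limit of the decreasing quantity $f'_+$ were some $\ell > m$ then $f(z) \geq \ell z + o(z)$, contradiction; it cannot be $< m$ similarly, and it cannot be $-\infty$ since $f$ is nonnegative and increasing). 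Then
\[
W_{\Phi(-q_\ast)}(x) - W_{\Phi(-q_\ast)}(x-y) = \int_{x-y}^x f'_+(t)\, dt,
\]
and since $f'_+(t) \to m$ as $t \to \infty$, the integral over a window of fixed length $y$ converges to $m y$, which is exactly the claimed equivalent.

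For Point (2), the left inequality $W_{\Phi(-q_\ast)}(x) - W_{\Phi(-q_\ast)}(x-y) \geq m y$ is just concavity again: $f'_+$ is nonincreasing, so $\int_{x-y}^x f'_+ \geq y\, f'_+(x) \geq \dots$ — actually the clean way is that $g(z) := f(z) - m z$ is concave with $g(z) \to$ (some finite limit or behaves sublinearly), so I would instead argue that $f(z) \geq m z$ for all $z \geq 0$: the chord from $0$ to $\infty$... more precisely, since $f$ is concave and $f(z)/z \to m$, concavity of $f$ gives $f(z) \geq m z$ (the graph of a concave function lies above each of its chords, and $m z$ is the limiting chord / asymptote; formally, for $z' > z$, $\frac{f(z')-f(z)}{z'-z} \geq \frac{f(z')}{z'} \cdot \frac{z'}{z'-z} - \dots$, or cleanest: $\frac{f(z)}{z}$ is nonincreasing in $z$ by concavity and $f(0)=0$, hence $\frac{f(z)}{z} \geq m$). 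Then $f(x) - f(x-y) \geq m x - f(x-y)$... this needs $f(x-y) \leq m(x-y)$, which goes the wrong way. So instead I would use: $f(x) - f(x-y) \geq y f'_+(x)$ and also $= \int_{x-y}^{x} f'_+ \geq y \inf_{[x-y,x]} f'_+ = y f'_+(x-0)$... The correct lower bound $m y$ comes from the fact that $f'_+(t) \geq m$ for all $t$ (since $f'_+$ is nonincreasing and tends to $m$), hence $\int_{x-y}^x f'_+(t)\,dt \geq m y$. This uses that the decreasing limit is $m$, established in Point (1).

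The upper bound in Point (2) is the main obstacle and requires quantifying the rate of convergence $f'_+(t) \to m$, which is where the hypotheses of the lemma (and ultimately the scale-function estimates) must enter. The idea: $f(x) - f(x-y) - m y = \int_{x-y}^x (f'_+(t) - m)\, dt$. We need $\int_{x-y}^x (f'_+(t)-m)\,dt \leq \frac{y}{x}(c_1 + c_2 y)$. Since $f'_+ - m \geq 0$ is nonincreasing, $\int_{x-y}^x (f'_+(t)-m)\,dt \leq y (f'_+(x-y) - m)$. On the other hand, $\int_0^{x-y}(f'_+(t)-m)\,dt = f(x-y) - m(x-y) - f(0) = f(x-y) - m(x-y)$, and by concavity $f'_+(t) - m \geq f'_+(x-y) - m$ on $[0, x-y]$, so $(x-y)(f'_+(x-y)-m) \leq f(x-y) - m(x-y)$, giving $f'_+(x-y) - m \leq \frac{f(x-y) - m(x-y)}{x-y}$. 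Thus
\[
f(x) - f(x-y) - m y \;\leq\; \frac{y\,(f(x-y) - m(x-y))}{x-y}.
\]
It then remains to show $f(z) - m z = O(1)$ as $z \to \infty$, i.e. that $W_{\Phi(-q_\ast)}(z) - \frac{2}{\Psi''(\Phi(-q_\ast))} z$ stays bounded, and more precisely to track how $\frac{1}{x-y}$ versus $\frac{1}{x}$ and the constant play out to produce $\frac{y}{x}(c_1 + c_2 y)$ — one would split into the regimes $y \leq x/2$ (where $x - y \asymp x$, giving $\frac{y}{x}(f(x-y)-m(x-y)) \lesssim \frac{y}{x} c_1$) and $y > x/2$ (where $y/x \asymp 1$ and the trivial bound $f(x) - f(x-y) - my \leq f(x) - my \leq C \leq c_2 y^2/x$ since $y^2/x \geq y/2$ works). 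The boundedness of $z \mapsto W_{\Phi(-q_\ast)}(z) - m z$ is the genuine analytic input: it follows from refining the Laplace-transform relation \eqref{eq:W} for $W^{(-q_\ast)}$ — writing $\Psi(\lambda + \Phi(-q_\ast)) + q_\ast = \Psi_{\Phi(-q_\ast)}(\lambda) = \frac{1}{2}\Psi''(\Phi(-q_\ast))\lambda^2 + o(\lambda^2)$ near $0$ and using a Tauberian/inversion argument (or the explicit representation of scale functions via the renewal measure of the associated subordinator) to get the second-order term — and I expect this to be the technically heaviest part of the argument, though it may be relegated to a known estimate on scale functions near criticality (in the spirit of the asymptotic \eqref{eq:AsymptMq} already recorded in the introduction).
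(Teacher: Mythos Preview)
Your approach is correct in outline but differs from the paper's in organization, and you overestimate the difficulty of the one genuine gap.

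For Point~(1), the paper computes the Laplace transform
\[
\int_0^{+\infty} e^{-\lambda x}\bigl(W_{\Phi(-q_\ast)}(x)-W_{\Phi(-q_\ast)}(x-y)\bigr)\,dx = \frac{1-e^{-\lambda y}}{\Psi_{\Phi(-q_\ast)}(\lambda)}\sim \frac{2y}{\Psi''(\Phi(-q_\ast))\,\lambda}
\]
and applies Karamata's Tauberian theorem, using that concavity makes the difference nonincreasing in $x$. Your route --- concavity forces $f'_+$ nonincreasing, and $f(z)\sim mz$ then forces $f'_+(z)\to m$, hence $\int_{x-y}^x f'_+\to my$ --- is more elementary and avoids Tauberian theory entirely. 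Both are fine; yours is arguably cleaner.

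For the upper bound in Point~(2), you correctly reduce everything to the boundedness of $g(z):=f(z)-mz$. This is \emph{not} heavy: since $g'=f'_+-m\ge 0$ is nonincreasing, monotone convergence gives
\[
\int_0^\infty (f'_+(t)-m)\,dt=\lim_{\lambda\downarrow 0}\Bigl(\frac{\lambda}{\Psi_{\Phi(-q_\ast)}(\lambda)}-\frac{m}{\lambda}\Bigr)-f(0)=-\frac{2\Psi'''(\Phi(-q_\ast))}{3\bigl(\Psi''(\Phi(-q_\ast))\bigr)^2}-f(0),
\]
a one-line Laplace computation. The paper does the analogous computation directly on $\int_0^\infty\bigl(f(a)-f(a-y)-my\bigr)\,da$ and then uses monotonicity of the integrand to bound the value at $x$ --- essentially your argument with the roles of $f'_+-m$ and $f(x)-f(x-y)-my$ swapped. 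So the two proofs converge at precisely this step, and the ``technically heaviest part'' you flag is in fact routine.

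One slip: in your case $y>x/2$ you write $f(x)-my\le C$, which is false (take $y$ close to $x/2$ with $x$ large). The fix is immediate: the error $f(x)-f(x-y)-my=\int_{x-y}^x(f'_+-m)\le \int_0^\infty(f'_+-m)=:C'$ is always bounded by a constant, and since $y/x>1/2$ you get $C'\le \tfrac{y}{x}\cdot 2C'$, which is absorbed in $c_1$. Also, you invoke $f(0)=0$, which need not hold (it is positive for bounded-variation $L$); this does not affect the argument, but the constant $f(0)$ should be carried through.
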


\begin{proof}
Starting from (\ref{eq:W}), we have 
$$
\int_0^{+\infty} e^{-\lambda x} \left(W_{\Phi(-q_\ast)}(x)-W_{\Phi(-q_\ast)}(x-y) \right) dx = \frac{1-e^{-\lambda y}}{\Psi(\lambda+\Phi(-q_\ast))+q_\ast} \equi_{\lambda\rightarrow 0}  \frac{2y}{\Psi^{\prime\prime}(\Phi(-q_\ast))\lambda}
$$
and  Point (1) follows from Karamata's Tauberian theorem, since by concavity of $W_{\Phi(-q_\ast)}$, the function $x\rightarrow W_{\Phi(-q_\ast)}(x)-W_{\Phi(-q_\ast)}(x-y)$ is non-increasing for $x\geq y$. Point (1) further implies that for $a\geq y$, the function $a\rightarrow W_{\Phi(-q_\ast)}(a)-W_{\Phi(-q_\ast)}(a-y)-\frac{2y}{\Psi^{\prime\prime}(\Phi(-q_\ast))}$ is non-increasing and non-negative. An application of the monotone convergence theorem then shows that its integral over $(0,+\infty)$ is finite :
\begin{multline*}
 \int_0^{+\infty}\left(W_{\Phi(-q_\ast)}(a)-W_{\Phi(-q_\ast)}(a-y) - \frac{2y}{\Psi^{\prime\prime}(\Phi(-q_\ast))}\right) da\\
=
\lim_{\lambda\downarrow 0} \int_0^{+\infty} e^{-\lambda a} \left(W_{\Phi(-q_\ast)}(a)-W_{\Phi(-q_\ast)}(a-y) - \frac{2y}{\Psi^{\prime\prime}(\Phi(-q_\ast))}\right) da =  - \frac{2 y \Psi^{\prime\prime\prime}(\Phi(-q_\ast))}{3 \left(\Psi^{\prime\prime}(\Phi(-q_\ast))\right)^2}>0.
\end{multline*}
Now, let $x\geq y$. Then, 
\begin{align*}
& \int_y^{+\infty}  \left(W_{\Phi(-q_\ast)}(a)-W_{\Phi(-q_\ast)}(a-y) - \frac{2y}{\Psi^{\prime\prime}(\Phi(-q_\ast))}\right) da\\
 &\qquad\qquad\geq  \int_y^{x}  \left(W_{\Phi(-q_\ast)}(a)-W_{\Phi(-q_\ast)}(a-y) - \frac{2y}{\Psi^{\prime\prime}(\Phi(-q_\ast))}\right) da\\
 &\qquad\qquad\geq  \left(W_{\Phi(-q_\ast)}(x)-W_{\Phi(-q_\ast)}(x-y) - \frac{2y}{\Psi^{\prime\prime}(\Phi(-q_\ast))}\right) (x-y)
\end{align*}
and we thus deduce the bound 
\begin{multline*}
W_{\Phi(-q_\ast)}(x)-W_{\Phi(-q_\ast)}(x-y) - \frac{2y}{\Psi^{\prime\prime}(\Phi(-q_\ast))} \\\leq \frac{1}{x} \left(y \sup_{x\geq y}\left(W_{\Phi(-q_\ast)}(x)-W_{\Phi(-q_\ast)}(x-y)\right)  -   \frac{2 y \Psi^{\prime\prime\prime}(\Phi(-q_\ast))}{3 \left(\Psi^{\prime\prime}(\Phi(-q_\ast))\right)^2} + \frac{y^2}{\Psi^{\prime\prime}(\Phi(-q_\ast))}    \right).
\end{multline*}
But, by monotony,  
$$\sup_{x\geq y}\left(W_{\Phi(-q_\ast)}(x)-W_{\Phi(-q_\ast)}(x-y)\right) \leq  W_{\Phi(-q_\ast)}(y)$$
and Point (2) follows by combining this estimate with the asymptotics (\ref{eq:limWq}).
\end{proof}

\subsection{On the moments of ${\bf Z}_{0<x+1}$}

We now study the first two moments of ${\bf Z}_{0<x+1}$.

\begin{lemma}\label{lem:EZq}
Fix $y> -1$. There exist two constants $\kappa_1,\kappa_2>0$, independent of $y$, such that for $x\geq y+1$ :
$$ \kappa_1 (1+y) \frac{e^{\Phi(-q_\ast)(x-y)}}{W_{\Phi(-q_\ast)}(x+1)}  \leq  \E_{x-y}\left[ {\bf Z}_{0<x+1}\right] \leq  \kappa_2 (1+ W_{\Phi(-q_\ast)}(1+y)) \frac{e^{\Phi(-q_\ast)(x-y)}}{W_{\Phi(-q_\ast)}(x+1)}.$$
\end{lemma}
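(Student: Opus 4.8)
The starting point is the exact identity for the first moment coming from Lemma~\ref{lem:Mn} with $n=1$ (so $R_1=0$), applied with the upper barrier at $x+1$ and the initial particle at $x-y$:
$$\E_{x-y}\left[{\bf Z}_{0<x+1}\right] = \E_{x-y}\left[1_{\{\tau_0^- < \tau_{x+1}^+\}} e^{\beta \tau_0^-}\right].$$
Just as in the subcritical section, the absence of positive jumps lets me split this according to whether the particle reaches the barrier $x+1$ before hitting $0$:
$$\E_{x-y}\left[{\bf Z}_{0<x+1}\right] = \E_{x-y}\left[e^{q_\ast \tau_0^-}\right] - \E_{x-y}\left[1_{\{\tau_{x+1}^+ < \tau_0^-\}} e^{q_\ast \tau_{x+1}^+}\right]\E_{x+1}\left[e^{q_\ast \tau_0^-}\right].$$
By the Esscher transform with $c=\Phi(-q_\ast)$ and the fluctuation identity for spectrally negative L\'evy processes (Kyprianou \cite[Theorem~8.1]{Kyp}), $\E_{z}[1_{\{\tau_{x+1}^+<\tau_0^-\}}e^{q_\ast\tau_{x+1}^+}] = W^{(-q_\ast)}(z)/W^{(-q_\ast)}(x+1)$, and from \eqref{eq:EZq}, $\E_z[e^{q_\ast\tau_0^-}] = e^{\Phi(-q_\ast)z}\E_z^{\Phi(-q_\ast)}[e^{-\Phi(-q_\ast)L_{\tau_0^-}}]$. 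Writing everything in terms of $W_{\Phi(-q_\ast)}(z)=e^{-\Phi(-q_\ast)z}W^{(-q_\ast)}(z)$ and factoring out $e^{\Phi(-q_\ast)(x-y)}$, I get
$$\E_{x-y}\left[{\bf Z}_{0<x+1}\right] = \frac{e^{\Phi(-q_\ast)(x-y)}}{W_{\Phi(-q_\ast)}(x+1)}\Big( W_{\Phi(-q_\ast)}(x+1)\,u(x-y) - W_{\Phi(-q_\ast)}(x-y)\,u(x+1)\Big),$$
where $u(z):=\E_z^{\Phi(-q_\ast)}[e^{-\Phi(-q_\ast)L_{\tau_0^-}}]$ is bounded above and below by positive constants uniformly in $z\geq 0$ (the upper bound is the analogue of \eqref{eq:SupL0}, valid by \cite[Corollary~4]{KP}; the lower bound follows since $u(z)\geq e^{-\Phi(-q_\ast)\cdot 0}\cdot 0$... more carefully, since $L_{\tau_0^-}\le 0$ a.s.\ and the process creeps or jumps only a bounded amount in expectation, one argues $u(z)\geq \delta>0$ uniformly — this should be recorded or follows from the convergence of the undershoot).

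**The bracketed term.** Everything reduces to controlling
$$D(x,y):=W_{\Phi(-q_\ast)}(x+1)\,u(x-y) - W_{\Phi(-q_\ast)}(x-y)\,u(x+1).$$
For the \emph{upper bound}, write $D(x,y)\le u(x-y)\big(W_{\Phi(-q_\ast)}(x+1)-W_{\Phi(-q_\ast)}(x-y)\big) + W_{\Phi(-q_\ast)}(x-y)\big(u(x-y)-u(x+1)\big)$; the first factor of the first term is handled by Lemma~\ref{lem:Wq}(2), which gives $W_{\Phi(-q_\ast)}(x+1)-W_{\Phi(-q_\ast)}(x-y)\le \frac{2(1+y)}{\Psi''(\Phi(-q_\ast))} + \frac{1+y}{x}(c_1+c_2(1+y)) \lesssim 1+W_{\Phi(-q_\ast)}(1+y)$ uniformly in $x\ge y+1$. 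The second term needs $|u(x-y)-u(x+1)|$ to be $O(1)$ times something controllable, which is where the main work lies (see below). For the \emph{lower bound}, I use that $u$ converges to a positive constant $\chi_{q_\ast}>0$ and that by Lemma~\ref{lem:Wq}(2) the increment $W_{\Phi(-q_\ast)}(x+1)-W_{\Phi(-q_\ast)}(x-y)\ge \frac{2(1+y)}{\Psi''(\Phi(-q_\ast))}>0$, so $D(x,y)\ge u(x-y)\big(W_{\Phi(-q_\ast)}(x+1)-W_{\Phi(-q_\ast)}(x-y)\big) - W_{\Phi(-q_\ast)}(x-y)|u(x-y)-u(x+1)|\ge \delta\cdot\frac{2(1+y)}{\Psi''(\Phi(-q_\ast))} - o(1)\cdot(1+y)$, which is $\gtrsim (1+y)$ once $x$ is large enough; for small $x$ (i.e.\ $x$ in a bounded range with $x\ge y+1$, forcing $y$ bounded too) one argues directly by positivity and continuity.

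**The main obstacle** is controlling the oscillation $u(x-y)-u(x+1)$ of the undershoot Laplace transform, uniformly in $y$, well enough that the error terms are dominated by $(1+y)$ (for the lower bound) and by $1+W_{\Phi(-q_\ast)}(1+y)$ (for the upper bound). One clean route: since $u(z)=\E_0^{\Phi(-q_\ast)}[e^{-\Phi(-q_\ast)(z+L_{\tau_{-z}^-})}]$ and the undershoot $-L_{\tau_{-z}^-}$ converges in distribution as $z\to\infty$ (by Roynette--Vallois--Volpi \cite[Theorem~2.3]{RVV} applied to $-L$, exactly as in Section~3, the Esscher-transformed L\'evy measure having exponential moments), $u$ is a bounded function converging to a limit, hence $|u(x-y)-u(x+1)|$ is bounded by $2\|u\|_\infty$ and, more to the point, tends to $0$ as $x\to\infty$ uniformly over $y$ in compact sets; combined with the observation that $W_{\Phi(-q_\ast)}(x-y)|u(x-y)-u(x+1)| \le W_{\Phi(-q_\ast)}(x-y)\cdot 2\|u\|_\infty$ and the $1/x$ rate in Lemma~\ref{lem:Wq}(2), one can absorb this into the stated bounds. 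A subtlety to watch: when $y$ is large (comparable to $x$), $x-y$ is small, $W_{\Phi(-q_\ast)}(x-y)$ is bounded, and the term is trivially $O(1)\le 1+W_{\Phi(-q_\ast)}(1+y)$; the delicate regime is $y$ fixed and $x\to\infty$, where the $1/x$ decay from Lemma~\ref{lem:Wq} must beat the linear growth of $W_{\Phi(-q_\ast)}(x-y)$ — and indeed $W_{\Phi(-q_\ast)}(x-y)\cdot\frac{1+y}{x} = O(1+y)$, which closes the estimate. I would organize the final write-up as: (i) derive the identity for $D(x,y)$; (ii) record the uniform two-sided bounds on $u$ and the uniform smallness of its oscillation; (iii) plug in Lemma~\ref{lem:Wq}(2) to get the upper bound; (iv) a short separate argument using positivity of the increment for the lower bound.
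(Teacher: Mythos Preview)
Your overall setup matches the paper's: you derive the same identity
\[
\E_{x-y}[{\bf Z}_{0<x+1}] = \frac{e^{\Phi(-q_\ast)(x-y)}}{W_{\Phi(-q_\ast)}(x+1)}\,D(x,y),\qquad
D(x,y)=W_{\Phi(-q_\ast)}(x+1)\,u(x-y) - W_{\Phi(-q_\ast)}(x-y)\,u(x+1),
\]
and for the upper bound you use the same add-and-subtract decomposition. However there are two genuine gaps.

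\textbf{Lower bound.} Your route is unnecessarily fragile. You try to bound $D(x,y)$ from below via the decomposition, which forces you to control $W_{\Phi(-q_\ast)}(x-y)\,|u(x-y)-u(x+1)|$; you then write this as ``$o(1)\cdot(1+y)$'', which is unjustified (nothing you have said gives a rate on the oscillation of $u$). The paper avoids this entirely. Going back one step before your $D(x,y)$ identity and applying the Markov property \emph{under} $\Pb^{\Phi(-q_\ast)}$, one has the exact rewriting
\[
\E_{x-y}[{\bf Z}_{0<x+1}] = e^{\Phi(-q_\ast)(x-y)}\,\E_{x-y}^{\Phi(-q_\ast)}\!\left[e^{-\Phi(-q_\ast)L_{\tau_0^-}}\,1_{\{\tau_0^-<\tau_{x+1}^+\}}\right].
\]
Since $L_{\tau_0^-}\le 0$, the exponential is $\ge 1$, so the right-hand side is at least $e^{\Phi(-q_\ast)(x-y)}\,\Pb_{x-y}^{\Phi(-q_\ast)}(\tau_0^-<\tau_{x+1}^+)$, and the two-sided exit formula plus Lemma~\ref{lem:Wq} gives the lower bound immediately, uniformly in $x\ge y+1$. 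No rate on $u$ is needed.

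\textbf{Upper bound.} You correctly isolate the obstacle: one must show $W_{\Phi(-q_\ast)}(x-y)\,|u(x-y)-u(x+1)|$ is bounded uniformly. But your justification is circular: you invoke ``the $1/x$ decay from Lemma~\ref{lem:Wq}'', yet Lemma~\ref{lem:Wq} concerns increments of $W_{\Phi(-q_\ast)}$, not of $u$. Mere convergence of $u(a)\to\chi_{q_\ast}$ (which is all that \cite{RVV} provides) is not enough, because $W_{\Phi(-q_\ast)}(x-y)\sim c(x-y)$ grows linearly; you need the quantitative estimate $|u(a)-\chi_{q_\ast}|=O(1/a)$. The paper obtains this by writing out the explicit density of $L_{\tau_0^-}$ under $\Pb_a^{\Phi(-q_\ast)}$ in terms of scale functions (Kyprianou \cite[Corollary~10.2 and Exercise~10.1]{Kyp}):
\[
u(a)-\chi_{q_\ast} = \int_0^\infty\!\!\Big(W_{\Phi(-q_\ast)}(a)-W_{\Phi(-q_\ast)}(a-v)-\tfrac{2v}{\Psi''(\Phi(-q_\ast))}\Big)e^{-\Phi(-q_\ast)v}\nu(-\infty,-v)\,dv + \tfrac{\sigma^2}{2}\Big(W'_{\Phi(-q_\ast)}(a)-\tfrac{2}{\Psi''(\Phi(-q_\ast))}\Big),
\]
and \emph{then} Lemma~\ref{lem:Wq}(2) (and its analogue for $W'_{\Phi(-q_\ast)}$, from concavity) applies termwise to give the $O(1/a)$ rate. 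This is the missing ingredient in your argument; once you have it, your decomposition for the upper bound closes exactly as you outlined.
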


\begin{proof}

Starting from Lemma \ref{lem:Mn} and applying the Markov property, together with (\ref{eq:EZq}) and \cite[Theorem 8.1]{Kyp}, we have  
\begin{align*}
\E_{x-y}\left[ {\bf Z}_{0<x+1}\right] &=   \E_{x-y}\left[ e^{q_\ast \tau_0^-} \right] - \E_{x-y}\left[1_{\{\tau_{x+1}^+ < \tau_0^-\}} e^{q_\ast\tau_{x+1}^+}\right] \E_{x+1}\left[e^{q_\ast \tau_0^-}\right]\\
&=e^{\Phi(-q_\ast)(x-y)} \left(\E_{x-y}^{\Phi(-q_\ast)}\left[e^{-\Phi(-q_\ast)L_{\tau_0^-}}  \right]  - \frac{W_{\Phi(-q_\ast)}(x-y)}{W_{\Phi(-q_\ast)}(x+1)} \E_{x+1}^{\Phi(-q_\ast)}\left[e^{-\Phi(-q_\ast)L_{\tau_0^-}}  \right]\right)\\
&=e^{\Phi(-q_\ast)(x-y)}  \E_{x-y}^{\Phi(-q_\ast)}\left[e^{-\Phi(-q_\ast)L_{\tau_0^-}}  1_{\{\tau_0^-<\tau_{x+1}^+ \}}    \right] .
\end{align*}
Now, since $L_{\tau_0^-}<0$, the exponential term is always greater than one, hence 
$$\E_{x-y}\left[ {\bf Z}_{0<x+1}\right]  \geq e^{\Phi(-q_\ast)(x-y)}  \Pb_{x-y}^{\Phi(-q_\ast)}\left(\tau_0^-<\tau_{x+1}^+\right) = e^{\Phi(-q_\ast)(x-y)} \frac{W_{\Phi(-q_\ast)}(x+1) - W_{\Phi(-q_\ast)}(x-y) }{W_{\Phi(-q_\ast)}(x+1) }.$$
As a consequence, we obtain from Lemma \ref{lem:Wq} the lower bound, for $x\geq y+1$ : 
$$\E_{x-y}\left[ {\bf Z}_{0<x+1}\right]  \geq e^{\Phi(-q_\ast)(x-y)}  \frac{2(y+1)}{\Psi^{\prime\prime}(\Phi(-q_\ast))} \frac{1}{W_{\Phi(-q_\ast)}(x+1)}.$$
We now study the upper bound. Let us set 
\begin{align*} 
H(x+1,x-y)&:=W_{\Phi(-q_\ast)}(x+1)  e^{-\Phi(-q_\ast)(x-y)} \E_{x-y}\left[ {\bf Z}_{0<x+1}\right]  \\
&=   W_{\Phi(-q_\ast)}(x+1) \E_{x-y}^{\Phi(-q_\ast)}\left[e^{-\Phi(-q_\ast)L_{\tau_0^-}}  \right]  - W_{\Phi(-q_\ast)}(x-y) \E_{x+1}^{\Phi(-q_\ast)}\left[e^{-\Phi(-q_\ast)L_{\tau_0^-}}  \right].\end{align*}
From Roynette, Vallois \& Volpi \cite{RVV}, we know that 
$$  \E_{a}^{\Phi(-q_\ast)}\left[e^{-\Phi(-q_\ast)L_{\tau_0^-}}  \right] \xrightarrow[a\rightarrow +\infty]{} \chi_{q_\ast}$$
where
$$ \chi_{q_\ast}  = \frac{1}{\Psi^{\prime\prime}(\Phi_-(-q_\ast))}  \left(\int_0^{+\infty} 2 u  e^{-\Phi(-q_\ast)u} \nu(-\infty, -u) du +   \sigma^2\right).$$
This leads us to decompose $H$ in two terms. First, by monotony
\begin{multline*}
0\leq \left(W_{\Phi(-q_\ast)}(x+1) -  W_{\Phi(-q_\ast)}(x-y) \right)  \E_{x-y}^{\Phi(-q_\ast)}\left[e^{-\Phi(-q_\ast)L_{\tau_0^-}}  \right] \\
 \leq W_{\Phi(-q_\ast)}(y+1) \sup_{a\geq 0} \E_{a}^{\Phi(-q_\ast)}\left[e^{-\Phi(-q_\ast)L_{\tau_0^-}}  \right].
\end{multline*}
It remains thus to find an upper bound  for the quantity 
$$W_{\Phi(-q_\ast)}(x-y) \left(  \E_{x-y}^{\Phi(-q_\ast)}\left[e^{-\Phi(-q_\ast)L_{\tau_0^-}}  \right] - \chi_{q_\ast} + \chi_{q_\ast} - \E_{x+1}^{\Phi(-q_\ast)}\left[e^{-\Phi(-q_\ast)L_{\tau_0^-}}  \right]  \right).$$
To do so,  let us recall that the distribution of $L_{\tau_0^-}$ is given on $(-\infty,0]$  by 
\begin{multline*}
\Pb_a(L_{\tau_0^-}\in dz) = \int_0^{+\infty} \left(e^{-\Phi_+(0) u} W(a) - W(a-u)\right) \nu(dz-u) du  \\+ \frac{\sigma^2}{2} 
\left(W^\prime(a) - \Phi_+(0) W(a)\right)\times  \delta_0(dz)  
\end{multline*}
where $\delta_0$ denotes the Dirac measure at 0, see Kyprianou \cite[Corollary 10.2 and Exercise 10.1]{Kyp}. Note that when $\sigma>0$, it is known that $W$ is of class $\mathcal{C}^1$ on $(0,+\infty)$, hence the derivative $W^\prime$ is well-defined. 
Now, we observe that the first term reads
\begin{multline*}
\left|\E_{x-y}^{\Phi(-q_\ast)}\left[e^{-\Phi(-q_\ast)L_{\tau_0^-}}  \right] - \chi_{q_\ast}\right|\\ \leq 
\int_0^{+\infty} \left|W_{\Phi(-q_\ast) }(x-y) - W_{\Phi(-q_\ast) }(x-y-u)-\frac{2u }{\Psi^{\prime\prime}(\Phi(-q_\ast))}\right|e^{-\Phi(-q_\ast) u}  \nu(-\infty, -u) du \\ + \frac{\sigma^2}{2} 
\left|W_{\Phi(-q_\ast)}^\prime(x-y)- \frac{2}{\Psi^{\prime\prime}(\Phi(-q_\ast))} \right|
\end{multline*}
and we decompose this expression in three parts. 
\begin{enumerate}
\item First, when $u\leq x-y$, the integrant is non-negative and we deduce from Lemma \ref{lem:Wq} that 
\begin{multline*}
 \int_0^{x-y} \left(W_{\Phi(-q_\ast) }(x-y) - W_{\Phi(-q_\ast) }(x-y-u)-\frac{2u }{\Psi^{\prime\prime}(\Phi(-q_\ast))}\right)e^{-\Phi(-q_\ast) u}  \nu(-\infty, -u) du \\\leq   \frac{1}{x-y}\int_0^{+\infty} u (c_1+c_2 u) e^{-\Phi(-q_\ast) u}  \nu(-\infty, -u) du.
 \end{multline*}
 Note that this last integral is finite since $\int_0^{1}u^2 \nu(du)<+\infty$.
\item Then, when $u\geq x-y\geq 1$, the integral is smaller than
 \begin{multline*}
 \int_{x-y}^{+\infty} \left|W_{\Phi(-q_\ast) }(x-y) -\frac{2u}{\Psi^{\prime\prime}(\Phi(-q_\ast))}\right|e^{-\Phi(-q_\ast) u}  \nu(-\infty, -u) du\\ \leq   e^{-\frac{\Phi(-q_\ast)}{2}(x-y)} \int_{1}^{+\infty} \left(W_{\Phi(-q_\ast) }(x-y) +\frac{2u }{\Psi^{\prime\prime}(\Phi(-q_\ast))}\right)e^{-\frac{\Phi(-q_\ast)}{2} u}  \nu(-\infty, -u) du.
  \end{multline*}
\item Finally,  when $\sigma>0$, observe that from (\ref{eq:W}), 
 $$\int_0^{+\infty}  \left(W_{\Phi(-q_\ast)}^\prime(a) -  \frac{2}{\Psi^{\prime\prime}(\Phi(-q_\ast))}\right) da = - \frac{2\Psi^{\prime\prime\prime}(\Phi(-q_\ast))}{3 (\Psi^{\prime\prime}(-q_\ast))^2 }.$$
Since by assumption $W_{\Phi(-q_\ast)}^\prime$ is non-increasing, the integrant is always non-negative and 
\begin{align*}
 - \frac{2\Psi^{\prime\prime\prime}(\Phi(-q_\ast))}{3 (\Psi^{\prime\prime}(-q_\ast))^2 }& \geq \int_0^{x-y}  \left(W_{\Phi(-q_\ast)}^\prime(a) -  \frac{2}{\Psi^{\prime\prime}(\Phi(-q_\ast))}\right) da\\
 & \geq \left(W_{\Phi(-q_\ast)}^\prime(x-y) - \frac{2}{\Psi^{\prime\prime}(\Phi(-q_\ast))}\right) (x-y).
\end{align*}
\end{enumerate}
Plugging everything together, we have thus obtained that there exists a constant $K>0$, independent of $y$, such that for $x-y\geq 1$ :
\begin{multline*}
 W_{\Phi(-q_\ast) }(x-y) \left|\E_{x-y}^{\Phi(-q_\ast)}\left[e^{-\Phi(-q_\ast)L_{\tau_0^-}}  \right] - \chi_{q_\ast}\right|\\
 \leq  K\left( \sup_{z\geq1} \frac{W_{\Phi(-q_\ast)}(z)}{z} +  \sup_{z\geq 1}   e^{-\frac{\Phi(-q_\ast)}{2}z}   (W^2_{\Phi(-q_\ast)}(z)+W_{\Phi(-q_\ast)}(z))  \right)<+\infty
\end{multline*}
which is finite thanks to (\ref{eq:limWq}). For the second term, 
$$W_{\Phi(-q_\ast) }(x-y) \left|\chi_{q_\ast}-\E_{x+1}^{\Phi(-q_\ast)}\left[e^{-\Phi(-q_\ast)L_{\tau_0^-}}  \right] \right|$$
we simply observe that since $W_{\Phi(-q_\ast)}$ is increasing,  $W_{\Phi(-q_\ast) }(x-y) \leq W_{\Phi(-q_\ast) }(x+1)$ and we proceed similarly. As a consequence, we finally conclude that there exists a constant $C_H>0$ such that for any $x\geq y+1$
\begin{equation}\label{eq:H}
H(x+1, x-y) \leq C_H (1+ W_{\Phi(-q_\ast)}(1+y)).
\end{equation}

\end{proof}

\begin{lemma}\label{lem:Mq2}
There exists a constant $K>0$ independent of $a>0$ such that for  $x>a-1$,
$$\E_a[{\bf Z}_{0<x+1}^2] \leq   K e^{\Phi(-q_\ast)a} \left( 1+ e^{\Phi(-q_\ast)(x+1)} \frac{W_{\Phi(-q_\ast)}(a)}{W^3_{\Phi(-q_\ast)}(x+1)}\right).$$
\end{lemma}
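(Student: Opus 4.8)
The plan is to adapt the proof of Lemma~\ref{lem:Z0b}(3) to the critical regime, using the sharp first--moment estimate of Lemma~\ref{lem:EZq} in place of \eqref{eq:ExZx}. Write $\Phi=\Phi(-q_\ast)$ and $W_\Phi=W_{\Phi(-q_\ast)}$ for short, and recall $\beta=q_\ast$ and $\Psi(\Phi)=-q_\ast$. I would start from Lemma~\ref{lem:Mn} with $n=2$ and the barrier at level $x+1$; since $R_2(z)=2\,\E_z[{\bf Z}_{0<x+1}]^2$, this gives
\[\E_a[{\bf Z}_{0<x+1}^2]=\E_a\!\left[1_{\{\tau_0^-<\tau_{x+1}^+\}}e^{q_\ast\tau_0^-}\right]+2\beta\int_0^{+\infty}\E_a\!\left[1_{\{\tau_0^-\wedge\tau_{x+1}^+>r\}}\,\E_{L_r}[{\bf Z}_{0<x+1}]^2\right]e^{\beta r}\,dr.\]
By \eqref{eq:EZq}, the first term is bounded by $\E_a[e^{q_\ast\tau_0^-}]=e^{\Phi a}\,\E_a^{\Phi}[e^{-\Phi L_{\tau_0^-}}]\le C_1\,e^{\Phi a}$, where $C_1:=\sup_{a\ge0}\E_a^{\Phi}[e^{-\Phi L_{\tau_0^-}}]<+\infty$ as used in the proof of Lemma~\ref{lem:EZq}; this accounts for the ``$1$'' in the statement.

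For the integral term I would apply the Esscher transform \eqref{eq:Esscher} with $c=\Phi$: the factor $e^{\Psi(\Phi)r}=e^{-q_\ast r}$ it produces cancels $e^{\beta r}$, so the integral equals
\[2\beta\,e^{\Phi a}\int_0^{+\infty}\E_a^{\Phi}\!\left[1_{\{\tau_0^-\wedge\tau_{x+1}^+>r\}}\,e^{-\Phi L_r}\,\E_{L_r}[{\bf Z}_{0<x+1}]^2\right]dr.\]
To control $\E_z[{\bf Z}_{0<x+1}]$ for $z\in(0,x+1)$ I would distinguish two cases. If $z\ge1$, Lemma~\ref{lem:EZq} (with $y=x-z$) yields $\E_z[{\bf Z}_{0<x+1}]\le\kappa_2\bigl(1+W_\Phi(1+x-z)\bigr)e^{\Phi z}/W_\Phi(x+1)$, hence $e^{-\Phi z}\E_z[{\bf Z}_{0<x+1}]^2\le\kappa_2^2\bigl(1+W_\Phi(1+x-z)\bigr)^2e^{\Phi z}/W_\Phi(x+1)^2$. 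If $z<1$, Lemma~\ref{lem:EZq} is not available, and I would instead use the crude bound $\E_z[{\bf Z}_{0<x+1}]\le\E_z[e^{q_\ast\tau_0^-}]\le C_1\,e^{\Phi z}$, so that $e^{-\Phi z}\E_z[{\bf Z}_{0<x+1}]^2\le C_1^2\,e^{\Phi z}$.

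Under $\Pb_a^{\Phi}$ the process is spectrally negative with $0$--scale function $W_\Phi$, so by \eqref{eq:resolvent} the occupation measure of the process killed on exiting $(0,x+1)$ has density $w\mapsto W_\Phi(a)W_\Phi(x+1-w)/W_\Phi(x+1)-1_{\{a\ge w\}}W_\Phi(a-w)$ on $(0,x+1)$. Integrating the bounds above against this resolvent and bounding the density by its (non-negative) first term, the $\{L_r\ge1\}$ part of the integral is at most
\[2\beta\kappa_2^2\,e^{\Phi a}\,\frac{W_\Phi(a)}{W_\Phi(x+1)^3}\int_0^{x+1}e^{\Phi w}\bigl(1+W_\Phi(1+x-w)\bigr)^2 W_\Phi(x+1-w)\,dw,\]
and the substitution $v=x+1-w$ turns the last integral into $e^{\Phi(x+1)}\int_0^{x+1}e^{-\Phi v}\bigl(1+W_\Phi(v)\bigr)^2 W_\Phi(v)\,dv$, which is at most $e^{\Phi(x+1)}C_0$ with $C_0:=\int_0^{+\infty}e^{-\Phi v}\bigl(1+W_\Phi(v)\bigr)^2 W_\Phi(v)\,dv<+\infty$ --- the convergence being ensured by $\Phi>0$ together with the linear growth \eqref{eq:limWq} of $W_\Phi$. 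This produces precisely the term $e^{\Phi a}e^{\Phi(x+1)}W_\Phi(a)/W_\Phi(x+1)^3$, the three powers of $W_\Phi(x+1)$ arising from one factor in the resolvent and two from squaring the first moment. For the $\{L_r<1\}$ part I would bound, for $w\in(0,1)$, the resolvent density by $W_\Phi(w)\le W_\Phi(1)$, using that $W_\Phi\ge0$ is concave and therefore subadditive; this piece is then $\le 2\beta C_1^2 W_\Phi(1)\Phi^{-1}(e^{\Phi}-1)e^{\Phi a}$, again absorbed in the ``$1$'' term. Adding the three contributions and choosing $K$ accordingly gives the claim, with $x>a-1$ used only to guarantee $a\in(0,x+1)$ so that \eqref{eq:resolvent} applies.

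The main obstacle is the region where $L_r$ is close to $0$: there Lemma~\ref{lem:EZq} is unavailable, and one must check that the crude first--moment bound contributes only a lower--order $O(e^{\Phi a})$ term, uniformly in $a$ --- this is exactly where the concavity, hence subadditivity, of $W_\Phi$ is needed. Everything else is routine bookkeeping; the one structural novelty compared with the subcritical case is that the additional $1/W_\Phi(x+1)$ factor carried by the critical first moment, once squared and integrated against the killed resolvent, generates the cube $W_\Phi(x+1)^3$ in the denominator.
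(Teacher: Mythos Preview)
Your proposal is correct and follows essentially the same route as the paper: start from Lemma~\ref{lem:Mn} with $n=2$, apply the Esscher transform with $c=\Phi(-q_\ast)$ to kill the $e^{\beta r}$ factor, plug in the first-moment estimate of Lemma~\ref{lem:EZq}, and integrate against the killed resolvent \eqref{eq:resolvent}, bounding the latter by its first (non-negative) term. The only cosmetic difference is in the boundary region $\{L_r\in(0,1)\}$: the paper keeps the factor $W_\Phi(a)$ from the resolvent there and absorbs the resulting $e^{\Phi a}W_\Phi(a)$ contribution into the second term of the final bound via \eqref{eq:limWq}, whereas you bound the full resolvent density by $W_\Phi(1)$ via subadditivity of $W_\Phi$ and absorb it directly into the ``$1$'' term --- your bookkeeping is arguably cleaner, but the substance is identical.
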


\begin{proof}
To study the second moment of ${\bf Z}_{0<x}$, we start again from Lemma \ref{lem:Mn} :
\begin{align*}
\E_a\left[ {\bf Z}_{0<x+1}^2\right]  &= \E_a\left[ {\bf Z}_{0<x+1}\right]  + q_\ast  \int_0^{+\infty} \E_{a}\left[1_{\{\tau_0^-\wedge \tau_{x+1}^+ > r \}} \E_{L_r}\left[ {\bf Z}_{0<x+1}\right]^2\right] e^{q_\ast r}dr\\
&= \E_a\left[ {\bf Z}_{0<x+1}\right]  +\frac{ q_\ast}{W^2_{\Phi(-q_\ast)}(x+1)}  \int_0^{+\infty} \E_{a}\left[1_{\{\tau_0^-\wedge \tau_{x+1}^+> r \}} e^{2\Phi(-q_\ast)L_r}  H^2( x+1,L_r)\right] e^{q_\ast r}dr\\
&=\E_a\left[ {\bf Z}_{0<x+1}\right]  +\frac{ q_\ast e^{\Phi(-q_\ast)a} }{W^2_{\Phi(-q_\ast)}(x+1)} \int_0^{+\infty} \E_{a}^{\Phi(-q_\ast)}\left[1_{\{\tau_0^-\wedge \tau_{x+1}^+ > r \}} e^{\Phi(-q_\ast)L_r}  H^2(x+1 ,L_r)\right] dr.
\end{align*}
From (\ref{eq:EZq}), the first term is bounded by 
$$\E_a\left[ {\bf Z}_{0<x+1}\right]  \leq e^{\Phi(-q_\ast) a} \sup_{a\geq 0} \E_a^{\Phi(-q_\ast)}\left[e^{-\Phi(-q_\ast) L_{\tau_0^-}}\right]$$
Using the formula (\ref{eq:resolvent}) for the resolvent of the killed process, we deduce that the integral is smaller than
\begin{align*}
&\int_0^{x+1} e^{\Phi(-q_\ast)y}  H^2(x+1, y) \frac{W_{\Phi(-q_\ast)}(a)W_{\Phi(-q_\ast)}(x+1-y)}{W_{\Phi(-q_\ast)}(x+1)} dy\\
&\qquad \qquad = e^{\Phi(-q_\ast)(x+1)} \frac{W_{\Phi(-q_\ast)}(a)}{W_{\Phi(-q_\ast)}(x+1)} \int_0^{x+1} e^{-\Phi(-q_\ast)z}  H^2(x+1, x+1-z) W_{\Phi(-q_\ast)}(z)dz .
\end{align*}
It remains thus to prove that this last integral is bounded as $x\rightarrow +\infty$. Using (\ref{eq:H}) with $y=z-1$, we have 
$$
 \int_0^{x} e^{-\Phi(-q_\ast)z}  H^2(x+1, x+1-z) W_{\Phi(-q_\ast)}(z)dz \leq C_H^2 \int_0^{+\infty} e^{-\Phi(-q_\ast)z}  (1+W_{\Phi(-q_\ast)}(z))^2W_{\Phi(-q_\ast)}(z)dz$$
which is finite thanks to (\ref{eq:limWq}). Furthermore
\begin{multline*}
  \int_x^{x+1} e^{-\Phi(-q_\ast)z}  H^2(x+1, x+1-z) W_{\Phi(-q_\ast)}(z)dz\\ =  e^{-\Phi(-q_\ast)x} \int_0^{1} e^{-\Phi(-q_\ast)z}  H^2(x+1, 1-z) W_{\Phi(-q_\ast)}(z+x)dz.
  \end{multline*}
 By definition, the function $H$ reads, for $z\in(0,1)$,
\begin{align*} H(x+1, 1-z) &= W_{\Phi(-q_\ast)}(x+1) e^{-\Phi(-q_\ast) (1-z)} \E_{1-z}\left[ {\bf Z}_{0 < x+1}\right]\\
& \leq W_{\Phi(-q_\ast)}(x+1) \sup_{a\in(0,1)}\E_a\left[{\bf Z}_0 \right]
\end{align*}
hence, since $W_{\Phi(-q_\ast)}$ is increasing, 
$$
  \int_x^{x+1} e^{-\Phi(-q_\ast)z}  H^2(x+1, x+1-z) W_{\Phi(-q_\ast)}(z)dz \leq   e^{-\Phi(-q_\ast)x} W^3_{\Phi(-q_\ast)}(x+1)\sup_{a\in(0,1)}\E_a\left[{\bf Z}_0 \right]^2 $$
and Lemma \ref{lem:Mq2} follows from the asymptotics (\ref{eq:limWq}).
\end{proof}

We may now finish the proof of Theorem \ref{theo:1}, following the same steps as in the subcritical case.

\subsection{Upper bound}

Let us come back to (\ref{eq:UpB}) :
$$\Pb_a({\bf Z}_0>n) \leq \Pb_a({\bf Z}_{0<x}>n) +  \Pb_a( {\bf M}\geq x)  \leq \frac{1}{n^2}\E_a[{\bf Z}_{0<x}^2] +  \Pb_a( {\bf M}\geq x).$$
Applying Lemma \ref{lem:Mq2} together with (\ref{eq:limWq}) and the Esscher transform, we deduce that there exists a constant $C>0$ independent of $a>0$ such that  as $x\rightarrow +\infty$ 
$$\Pb_a({\bf Z}_0>n) \leq C e^{\Phi(-q_\ast)x}\frac{W^{(-q_\ast)}(a)}{x^3 n^2} +  \Pb_a( {\bf M}\geq  x).$$
Let us take a sequence $(x_n)$ such that
$\dfrac{e^{\Phi(-q_\ast) x_n}}{x_n} = n.$
Then $x_n \equi_{n\rightarrow+\infty} \frac{1}{\Phi(-q_\ast)} \ln(n)$ and using the  asymptotics of ${\bf M}$ given in (\ref{eq:AsymptMq}), we obtain that 
$$\Pb_a({\bf Z}_0>n) \leq C  W^{(-q_\ast)}(a)  \frac{n}{n^2 (\ln(n))^2} + c_{q_\ast} W^{(-q_\ast)}(a)  \frac{1}{n (\ln(n))^2}$$
which is the announced upper bound.

\subsection{Lower bound}

Let us come back to (\ref{eq:LowB}) : 
$$
 \Pb_a({\bf Z}_0 > n) \geq \Pb_x\left({\bf Z}_{0<x+1} > n \right) \Pb_a({\bf M}\geq x ).
$$
We take as before a sequence $(x_n)_{n\geq 1}$ such that $\E_{x_n}\left[ {\bf Z}_{0<x_n+1}\right]=2n$.  Lemma \ref{lem:EZq} applied with $y=0$ implies that as $n\rightarrow +\infty$, 
$$ \kappa_1\, n \leq   \frac{e^{\Phi(-q_\ast) x_n}}{x_n} \leq \kappa_2 \,n $$  
for some constants $\kappa_1, \kappa_2>0$. Applying Paley-Zygmund's inequality as in the subcritical case, we deduce from (\ref{eq:AsymptMq}) that as $n\rightarrow +\infty$,
\begin{equation}\label{eq:lowZq}
 \Pb_a({\bf Z}_0 > n) \geq  c_{q_\ast}   \frac{n^2}{\E_{x_n}[{\bf Z}_{0<x_n+1}^2]} W^{(-q_\ast)}(a)\frac{1}{n (\ln(n))^2}.
 \end{equation}
But, from Lemma \ref{lem:Mq2}, the second moment of ${\bf Z}_0$ is bounded by 
\begin{align*}
\E_{x_n}[{\bf Z}_{0<x_n+1}^2] &\leq  K  e^{\Phi(-q_\ast)x_n} +  Ke^{\Phi(-q_\ast)(2 x_n + 1 )} \frac{W_{\Phi(-q_\ast)}(x_n)}{W^3_{\Phi(-q_\ast)}(x_n+1)}\\
&\leq   \widetilde{K} \left( n \ln(n) +    n^2  \frac{x_n^2 W_{\Phi(-q_\ast)}(x_n)}{W^3_{\Phi(-q_\ast)}(x_n+1)}\right)\leq   \widetilde{K}  n^2  \left( 1 +    \sup_{z\geq1} \frac{z^2}{W^2_{\Phi(-q_\ast)}(z)}\right)
\end{align*}
for some constant $\widetilde{K}>0$, and the lower bound follows by plugging this last inequality in (\ref{eq:lowZq}).
\qed

%

\addcontentsline{toc}{section}{References}

\end{document}